\documentclass[twoside,11pt]{article} 

\usepackage{fullpage,graphicx,amsmath,amssymb,bbm,amsfonts,amsthm,enumerate,natbib,url,color,caption,enumerate}
\usepackage[font = scriptsize]{subcaption}

\theoremstyle{plain}
\newtheorem{thm}{Theorem}
\newtheorem{lem}[thm]{Lemma}
\newtheorem{prop}[thm]{Proposition}
\newtheorem{cor}[thm]{Corollary}

\theoremstyle{definition}

\theoremstyle{remark}

\numberwithin{equation}{section}
\newcommand{\Step}[1]{\textit{Step #1:}} 
\newcommand{\abs}[1]{\left| #1\right|}
\newcommand{\norm}[1]{\left\lVert #1 \right\rVert}
\newcommand{\norms}[1]{\| #1 \|} 
\newcommand{\ind}{\mathbbm{1}}
\newcommand{\pr}{\mathbb{P}}
\newcommand{\R}{\mathbb{R}}
\newcommand{\E}{\mathbb{E}}

\newcommand{\floor}[1]{\left\lfloor #1 \right\rfloor}
\newcommand{\ceil}[1]{\left\lceil #1 \right\rceil}
\newcommand{\Cov}{\mathrm{Cov}}
\newcommand{\Var}{\mathrm{Var}}
\newcommand{\Tr}{\mathrm{Tr}}

\newcommand{\argmin}[1]{\underset{#1}{\operatorname{arg}\operatorname{min}}\;}

\newcommand{\indist}{\stackrel{d}{\to}}

\newcommand{\mb}{\mathbf}
\newcommand{\mbb}{\boldsymbol}
\newcommand{\MSPE}{\mathrm{MSPE}}

\title{On $b$-bit min-wise hashing for large-scale regression and classification with sparse data}
\author{Rajen D. Shah\thanks{Supported by The Alan Turing Institute under the EPSRC grant EP/N510129/1 and an EPSRC programme grant.} \\ 
University of Cambridge \\
\url{r.shah@statslab.cam.ac.uk}
\and Nicolai Meinshausen \\ 
ETH Z\"urich\\
\url{meinshausen@stat.math.ethz.ch}}

\begin{document}
\maketitle
\begin{abstract}
Large-scale regression problems where both the number of variables, $p$, and the number of observations, $n$, may be large and in the order of millions or more, are becoming increasingly more common. Typically the data are sparse: only a fraction of a percent of the entries in the design matrix are non-zero. Nevertheless, often the only computationally feasible approach is to perform dimension reduction to obtain a new design matrix with far fewer columns and then work with this compressed data.

$b$-bit min-wise hashing \citep{li2011theory,li2011hashing} is a promising  dimension reduction scheme for sparse matrices
which produces a set of random features such that regression on the resulting design matrix approximates a kernel regression with the resemblance kernel. In this work, we derive bounds on the prediction error of such regressions.
For both linear and logistic models, we show that the average prediction error vanishes asymptotically as long as $q \|\mbb\beta^*\|_2^2 /n \rightarrow 0$, where $q$ is the average number of non-zero entries in each row of the design matrix and $\mbb \beta^*$ is the coefficient of the linear predictor.

We also show that ordinary least squares or ridge regression applied to the reduced data can in fact allow us fit more flexible models. We obtain non-asymptotic prediction error bounds for interaction models and for models where an unknown row normalisation must be applied in order for the signal to be linear in the predictors.
\end{abstract}

\bigskip
  
\noindent {Key-words: large-scale data, min-wise hashing, resemblance kernel, ridge regression, sparse data}

\section{Introduction}
The modern field of high-dimensional statistics has now developed a powerful range of methods to deal with datasets where the number of variables $p$ may greatly exceed the number of variables~$n$ (see 
 \citet{buhlmann2011statistics} for an overview of recent advances). The prototypical example of microarray data, where $p$ may be in the tens of thousands but~$n$ is typically not more than a few hundred, has motivated much of this development. Yet not all modern datasets come in this sort of shape and size. The emerging area of `large-scale data' or the more vaguely defined `Big Data' is a response to the increasing prevalence of computationally challenging datasets as arise in text analysis or web-scale prediction tasks, to give two examples. Here both $n$ and $p$ can run into the millions or more, particularly if interactions are considered. In these `large $p$, large $n$' regression scenarios, one can imagine situations where ordinary least squares (OLS) has a  competitive performance for prediction, but the sheer size of the data renders it infeasible for computational rather than statistical reasons.

An important feature of many large-scale datasets is that they are sparse: the overwhelming majority of entries in the design matrices are exactly zero. This is not to be confused with signal sparsity, a common assumption in the high-dimensional context. Indeed, when the design matrix is sparse, having only a few variables that contribute to the response would make the expected response values of all observations with no non-zero entries for the important variables exactly the same; one expects that such a property would not be possessed by many datasets. However, similarly to the way in which many high-dimensional techniques exploit sparsity to improve statistical efficiency, one might hope that sparsity in the data could be leveraged to yield both computational and statistical improvements, and indeed we demonstrate in this work that this can be achieved.

Kernel machines are an important class of machine learning methods for which such large-scale data poses particularly serious computational challenges. For example, standard implementations of kernel ridge regression would have computational complexity $O(n^3)$ and a storage cost of $O(n^2)$ when $p$ is considered fixed; a large $p$ will increase these computational costs depending on the kernel to be used. There has therefore been a great deal of work on approximating kernel machines by first randomly mapping the $n\times p$ design matrix $\mb X$ to a $n \times d$ matrix $\mb S$ with $d \ll p$ such that dot products between rows of $\mb S$ approximate the kernel evaluated on the corresponding rows of $\mb X$. Then a regular ridge regression on $\mb S$ will resemble a kernel ridge regression on $\mb X$, for example.

A remarkably effective way of forming $\mb S$ that is applicable when the design matrix is sparse and binary, is $b$-bit min-wise hashing \citep{li2011theory,li2011hashing} which is based on an earlier technique called min-wise hashing \citep{broder1998min,cohen2001finding,datar2002estimating}. Here $\mb S$ is constructed such that the dot product between any two rows of $\mb S$, $\mb s_i^T \mb s_j$, can approximate the \emph{resemblance} or \emph{Jaccard similarity} or between the corresponding rows of $\mb X$, defined as $|\mb z_i \cap \mb z_j|/|\mb z_i \cup \mb z_j| $
where $\mb z_i = \{k : X_{ik} \neq 0\}$.

The empirical performance of regression and classification procedures following $b$-bit min-wise hashing \citep{li2011hashing, li2013b} is particularly impressive. Existing theory on $b$-bit min-wise hashing \citep{li2011theory} has focused on the variance and bias in the approximation of the kernel. However, there remain significant gaps in our theoretical understanding of this important procedure when used to approximate a kernel machine:
\begin{enumerate}[(a)]
\item What sorts of regression models is the resemblance kernel well-suited for and how does sparsity of the design matrix play a role?
\item What is the loss in prediction accuracy due to the approximation provided by $b$-bit min-wise hashing for different sorts of regression procedures?
\item What is the overall prediction error incurred by different regression methods following $b$-bit min-wise hashing in different regression models?
\end{enumerate}
An answer to (c) would be the ultimate goal here, and it would appear that in order to tackle this one must first solve (a) and (b). In this paper, we take a very different approach and aim to answer (c) directly: rather than considering what sorts of functions lie in the reproducing kernel Hilbert space (RKHS) associated with the resemblance kernel and have low RKHS norms, we look at the sorts of signals that can be approximated well by linear combinations of columns of the matrix $\mb S$ constructed by $b$-bit min-wise hashing. In this way, we use the random feature expansions provided by $b$-bit min-wise hashing to understand the predictive properties of the resemblance kernel.

\subsection{Our contributions and organisation of the paper}
In this paper we derive finite-sample bounds on the expected risk of linear and logistic regression following dimension reduction through $b$-bit min-wise hashing under various different models. Our results show that the method, and hence also the resemblance kernel, are particularly suited to sparse data.

We describe the $b$-bit min-wise hashing algorithm in Section~\ref{sec:MRS} and also discuss in greater details the connection to the resemblance kernel. We also introduce a generalisation of $b$-bit min-wise hashing applicable to sparse data with real-valued entries motivated by our theory.
Perhaps the simplest sorts of signals that we could hope to be able to fit well are linear signals of the form $\mb X \mbb\beta^*$.
In Section~\ref{section:main} we first consider how well a linear combination of columns of $\mb S$ can approximate such a signal. We then study a much larger class of signals defined by first scaling the rows of $\mb X$ in different ways depending on their sparsity and then forming a linear signal from a scaled version of $\mb X$. Some form of row normalisation is often performed on the original data as a pre-processing step, but the optimal normalisation to use is seldom known; our theory shows how $b$-bit min-wise hashing, and hence also the resemblance kernel, is able to automatically discover an appropriate scaling in several settings.

In Section~\ref{sec:OLS_main} we study the performance of ordinary least squares, ridge regression and $\ell_2$-penalised logistic regression using the reduced design matrix it creates. Our results are applicable to both linear signals and nonlinear signals of the sort described above. In the former setting, we show that the expected mean-squared prediction error is bounded by a small constant times $\sqrt{q/n} \norm{\mbb \beta^*}_2$, where $q$ is the average number nonzero entries in the rows of $\mb X$ and $\mbb\beta^*$ is the coefficient vector.
We present similar results for logistic regression. 

In Section~\ref{section:interaction} we study another form of nonlinear signal that can be approximated by the $b$-bit minwise hashing and the resemblance kernel: we show that interaction models in the original data can also be captured by main effects regression on the compressed data.
Variable importance measures are discussed in Section~\ref{section:extensions}. We conclude with a discussion in Section~\ref{sec:discussion}. The appendix contains all proofs, an additional result concerning the implications of our approximation error bound for properties of the RKHS of the resemblance kernel, and an empirical study validating our bounds.

\subsection{Related work}
There has been very little work in understanding properties of the resemblance kernel. One of the few pieces of work in this direction is  \citet{BOUCHARD2013615}, who show that the kernel matrix with entries given by the Jaccard similarity between different elements of the power set of $\{1,\ldots,p\}$ minus the empty set is positive definite. It follows that the RKHS of the resemblance kernel contains every real-valued function on $p$-dimensional binary vectors (see Section~\ref{sec:RKHS_interpret}). However, this result is not informative for understanding which sorts of regression models a kernel ridge regression will perform well for, a question which we provide some answers to through our study of $b$-bit min-wise hashing.

Approximating kernel methods using random feature expansions was pioneered by \citet{Rahimi2007} who used random Fourier features to approximate translation invariant kernels such as the Gaussian kernel. \citet{sutherland2015} provides bounds on the approximation of the corresponding kernel as well as bounds on the distance between the predictions from regression on the random features and kernel ridge regression in terms of distances between the true kernel and its approximation.
 \citet{Le2013} introduce a scheme related to random Fourier features that further improves the computational efficiency. \citet{rahimi2008uniform} consider more general random feature expansions and study how well they can approximate functions in a family determined by the distribution of feature expansions in terms of a certain form of function norm defined on the family. \citet{rahimi2009weighted} provides prediction error bounds for a method that minimises the empirical risk of a weighted sum of random feature expansions where weights are constrained in $\ell_\infty$-norm.
\citet{bach2015equivalence} studies how well random feature expansions can approximate elements of their corresponding RKHS in terms of the eigenvalues of the associated kernel integral operator. The Nystr\"om method \citep{williams2001using} is related and aiming at a computationally efficient low-rank approximation to the full kernel matrix; see  \citep{bach2013sharp} and \citep{rudi2015less} for approximation guarantees.

A distinguishing feature of our work is that bounds are obtained not in terms of the norm of the RKHS of the resemblance kernel, which would be difficult to interpret, but in terms of quantities derived directly from the different models considered (we look at linear models with unknown row scaling and at nonlinear interaction models). We could divide the analysis into two parts: (i) first we could try to understand the predictive accuracy when using exact kernel regression with the resemblance kernel for such true regression functions and then (ii) in a second step understand how much predictive accuracy we lose by using $b$-bit minwise hashing as an approximation to using exact kernel regression with the resemblance kernel. Instead of making these two separate steps, we study here directly how well $b$-bit minwise hashing performs for these model classes. 

Properties of $b$-bit min-wise hashing related to similarity search are studied in \citet{li2011theory}. Theory concerning its use for large-scale learning is presented in \citet{li2011hashing} which quantifies the mean and variance of entries in the Gram matrix $\mb S \mb S^T$ and its relationship to the resemblance kernel as well as providing comparisons with random projections and \emph{Vowpal Wabbit}. 
Random feature expansions for other types of kernels are developed in \citet{shi2009hash, weinberger2009feature, Vedaldi2012, AISTATS2012_KarK12, Li2014, pennington2015spherical}.
 
More generally, there is a huge variety of dimension reduction schemes across the statistics and computer science literature.
Performing principal component analysis \citep{jolliffe1986principal} (PCA) and retaining only the first $d$ components is one of the most popular methods. One drawback however in the large-scale data setting is that computing the principal components can be computationally demanding.
The method of random projections, motivated by the celebrated Johnson--Lindenstrauss lemma \citep{johnson1984extensions}, offers dimension reduction at a low computational cost. In this scheme, $\mb X$ is mapped to $\mb X \mb A$, where $\mb A$ is a $p \times d$ matrix typically with i.i.d.\ random entries. Efficient implementations are discussed in \citet{achlioptas2001database, li2006very} and some numerical results on random projections and a wider literature review are in \citet{fradkin2003experiments, Vempala2005}. The software package \emph{Vowpal Wabbit} \citep{langford2007vowpal} is a popular learning system for large-scale datasets that uses sparse random projections.

A separate line of work has considered pre-multiplying $\mb X$ with a random matrix $\mb A \in \R^{m \times n}$ to produce a reduced matrix $\mb A\mb X \in \R^{m \times p}$, known as a \emph{sketch}. Though the dimension $p$ is not reduced, when $n$ is large, performing OLS on the sketched matrix may be possible despite the computational infeasibility of applying least squares directly to $\mb X$. A number of works have studied properties sketched least squares (see \citet{boutsidis2009random, drineas2011faster, Mahoney2011, Pilanci2015} and references therein) whilst \citet{Pilanci2014} propose an iterative variant of this scheme. \citet{Yang2015}  considers sketching ideas in the context of kernel ridge regression.


\section{$b$-bit min-wise hashing} \label{sec:MRS}
Given a sparse design matrix $\mb{X} \in \R^{n \times p}$, the aim of dimension reduction is to map this to a  compressed matrix $\mb{S} \in \R^{n \times d}$, in a way that is computationally efficient and such that the relevant information in $\mb{X}$ is preserved in $\mb{S}$.
Section~\ref{sec:constr_S} describes the mapping to $\mb{S}$ under $b$-bit min-wise hashing for binary data, as proposed in \citet{li2011theory} and \citet{li2011hashing}. The construction may seem unintuitive at first sight, but we will try to shed light on why the scheme works for linear and interaction models throughout the manuscript.

\subsection{Notation}
Given a matrix $\mb{U}$, we will write $\mb{u}_i$ and $\mb{U}_j$ for the $i$th row and $j$th column respectively, where both are to be regarded as column vectors. The $ij$th entry will be denoted $U_{ij}$.
A vector of $1$'s will be denoted $\mb 1$.


When the parentheses following probability and expectation signs, $\pr$ and $\E$, enclose multiple potential sources of randomness, we will sometimes add subscripts to indicate what is being considered as random. For example, if $U$ and $V$ are random variables, we may write $\E_U (U |V)$ for the conditional expectation of $U$ given $V$, and $\E_{U, V}(U + V)$ for the expected value of $U + V$.

\subsection{Construction of $\mb{S}$ with $b$-bit min-wise hashing and binary variables} \label{sec:constr_S}
The compressed matrix $\mb S$ generated by $b$-bit min-wise hashing consists of blocks of size $2^b$, where we may choose the number of blocks $L$. Each block is created using a random permutation and the blocks of columns form a collection of $L$ i.i.d.\ random matrices. 

%
There are three steps to the construction.
\begin{itemize}
 \item[\Step{1}] Generate a random permutation of the set $\{1, \ldots, p\}$, $\pi_l$, and permute the columns of $\mb{X}$ according to this permutation.
 \item[\Step{2}] Search along each row of the permuted design matrix (in order of increasing column index) and record in the vector $\mb{H}_l \in \mathbb{N}^n$ the indices of the variables (indexed as in the original order) with the first non-zero value or the vector $\mb{M}_l \in \mathbb{N}^n$ the indices of the variables (indexed as in the permuted order) with the first non-zero value.
 \item[\Step{3}] Form $\mb{S}_l \in \{0,1\}^{n\times 2^b}$ with $i$th row given by the last $b$ bits of the binary representation of the $i$th entry of $\mb{M}_l$.  For example, when $b=1$, all odd numbers in $\mb{M}_l$ map to the vector $(0,1)$, whereas all even numbers map to $(1,0)$.
\end{itemize}
This construction is illustrated for a toy example in Table~\ref{tab:illus1}.
{\edef\savedbaselineskip{\the\baselineskip\relax}
\begin{table}[ht]
\centering
\begin{tabular}{|p{8.1cm}| c| c|}
\hline
$ {\baselineskip=\savedbaselineskip \mb{X} = \bordermatrix{& 1 & 2 & 3 & 4 \cr
                & \cdot & 1 & \cdot & 1 \cr
                & \cdot & \cdot & 1 & 1\cr
                & 1 & \cdot & 1 & \cdot \cr
                & \cdot & 1 & 1 & \cdot \cr
                & 1 & 1 & \cdot & \cdot} } \stackrel{\pi_l = 2314}{\mapsto}                
                {\baselineskip=\savedbaselineskip \bordermatrix{& 3 & 1 & 2 & 4 \cr
                & \cdot & \cdot & \bf 1  & 1 \cr
                & \bf 1 & \cdot & \cdot & 1 \cr
                & \bf 1 & 1 & \cdot & \cdot \cr
                & \bf 1 & \cdot & 1 & \cdot \cr
                & \cdot & \bf 1 & 1 & \cdot}}$
 & 
 $\mb{H}_l = \begin{pmatrix} 2 \\ 3 \\ 3 \\ 3 \\ 1 \end{pmatrix}$,
$\mb{M}_l = \begin{pmatrix} 3 \\ 1 \\ 1 \\ 1 \\ 2 \end{pmatrix}$ 
 & 
 $\mb{S}_l = \begin{pmatrix}
 0 & 1 & 0 & 0 \\
 0 & 0 & 0 & 1 \\
 0 & 0 & 0 & 1 \\
 0 & 0 & 0 & 1 \\
 0 & 0 & 1 & 0 \end{pmatrix}$
 \\
 \emph{Step 1}: non-zero indices whose variable indices will appear in $\mb{H}_l$ in Step 2 are in bold. & \emph{Step 2}. & \emph{Step 3}. \\
 \hline 
\end{tabular}
\caption{Steps 1--3 applied to a toy example with $b=2$. Dots represent zeroes.}
\label{tab:illus1}
\end{table}}

We can think of each column of $\mb S_l$ as representing different categories for the observations. The matrix $\mb S_l$ itself codes for the assignment of the different rows of $\mb X$ to the different categories. Different blocks $\mb S_l$ then represent different random categorisations. Identical rows will always be assigned the same categories and the more different the rows are, the less likely they are to be assigned the same category. The notion of difference here is that of \emph{resemblance}; see Section~\ref{sec:Jaccard}

Note that one would not necessarily follow the above steps when implementing $b$-bit min-wise hashing. In practice, one would not store the entire matrix of signs nor all the random permutations.
 In an implementation, hash functions \citep{carter1979universal} would be used to create the matrix $\mb S$ deterministically, though it is 
beyond the scope of this paper to go into the details; see \citet{li2013b} for more information and further computational improvements. With this approach, $\mb S$ would be 
created row-by-row, and only a single observation from $\mb X$ would need to be kept in memory at any one time. Furthermore, many rows could be created in parallel. Other ideas such as one-permutation hashing \citep{li2012one} can also be used to speed up the pre-processing step.

  
\subsection{Continuous data and additional randomisation} \label{sec:shuffled}

For continuous data, we introduce a modification where we replace the map extracting the last $b$ bits by $L$ random maps in the following way.
Fix $b$ and let $\mbb\Psi \in \{1, \ldots, 2^b\}^{p \times L}$ be a random matrix with independent entries each having the uniform distribution on the set $\{1,\ldots,2^b\}$. We then create $\mb S$ by modifying the previous Step 3 to the following.

\begin{itemize}
 \item[\Step{3}]  Form $\mb S_l \in \{0, 1\}^{n \times 2^b}$ with $i$th row all zero except component $\Psi_{H_{il} l}$ takes the value 1.
 \item[\Step{4}]  If $\mb X$ is not binary, multiply the $i$th row of $\mb{S}_l$ by $X_{i{H_{il}}}$.
\end{itemize}

This generalisation is motivated by our theoretical results on how well the column space of $\mb S$ can capture different sorts of signals (see Section~\ref{sec:un-scaled}).

Let $\mb z_i= \{k:X_{ik} \neq 0\}$ be the set of variable indices whose entries have non-zero values for the $i$th observation.
Performing the steps above for all $l=1,\ldots,L$, we get $n \times L$ matrices $\mb{H}$, and $\mb{M}$ given by
\begin{align}
 H_{il} = &  \argmin{k\in \mb z_i} \pi_l(k), \label{eq:def_H}\\
 M_{il} = &  \min_{k\in \mb z_i} \pi_l(k) = \pi_l(H_{il}), \label{eq:def_M}
\end{align}
The matrix $\mb{S}$ is a binary $n\times 2^bL$ matrix.  With a slight abuse of notation, we will
denote by $\mb S_{ilc}$ the $c$th entry in the $l$th block of $\mb S$:
\begin{align}
  S_{ilc} := S_{i (c+(l-1)2^b)} = X_{iH_{il}}\ind_{\{\Psi_{H_{il}l}=c\}}, \qquad \mbox{for } c=1,\ldots,2^b. \label{eq:def_S}
\end{align}

If not stated otherwise, we will work with this second randomised
variation of $b$-bit min-wise hashing from now on. We emphasise that we do not make the claim this version is to be preferred over the original proposal of \citet{li2011theory} and \citet{li2011hashing} when data is binary. We simply introduce the additional randomisation here to simplify the analysis. We note that the two versions are 
essentially identical for all practical purposes when $b$ is not too large.

\subsection{The resemblance kernel} \label{sec:Jaccard}
We now briefly describe the connection between $b$-bit min-wise hashing and the resemblance kernel alluded to earlier. This is not needed for the rest of the paper, though it provides some intuition for the scheme. A more detailed analysis from this perspective is carried out by \citet{li2011hashing} and we refer the reader to \citet{Hofmann2008} for a review of kernel methods and the kernel trick.

Suppose $\mb X$ is binary.
Consider the normalised Gram matrix of the compressed design $\mb S$ from (randomised) $b$-bit min-wise hashing, $\mb S \mb S^T /L$. The expected value of the $ij$th component may be calculated as follows.
\begin{align*}
 \E_{\mbb\pi, \mbb\Psi}(\mb s_i^T \mb s_j/L) &= \frac{1}{L}\sum_{l=1}^L \sum_{c=1}^{2^b} \E_{\mbb\pi, \mbb\Psi} (\ind_{\{\Psi_{H_{il}l}=c\}}\ind_{\{\Psi_{H_{jl}l}=c\}}) \\
 &= \pr(\Psi_{H_{il}l} = \Psi_{H_{jl}l}) \\
 &= \pr(\Psi_{H_{il}l} = \Psi_{H_{jl}l} | H_{il}=H_{jl})\pr(H_{il}=H_{jl}) \\
 &\qquad +\pr(\Psi_{H_{il}l} = \Psi_{H_{jl}l} | H_{il}\neq H_{jl})\{1- \pr(H_{il}=H_{jl})\} \\
 &= \frac{|\mb z_i \cap \mb z_j|}{|\mb z_i \cup \mb z_j|}( 1-2^{-b}) +2^{-b}.
\end{align*}
Thus the $ij$th entry is an average of $L$ i.i.d.\ random variables with expectation a constant plus a constant times the resemblance between the $i$th and $j$th rows of $\mb X$. If an intercept term is included when regressing on $\mb S$, the additive constant plays no part, and the scaling would be absorbed into the scaling of the regression coefficients.
We also note that when $\mb X$ is continuous, the resulting kernel is similar to the the CoRE kernels of \citet{Li2014}.

Now as the resemblance kernel is positive definite,
the theory surrounding the kernel trick tells us that 
any $\ell_2$-regularised regression on $\mb S$ is effectively approximating a regularised regression on transformed data $\phi(\mb x_i)$ where $\phi : \{0,1\}^p \to \mathcal{H}$ and $\mathcal{H}$ is a high-dimensional inner product space (the feature space). This space may be taken to be a reproducing kernel Hilbert space (RKHS), and then $\phi$ and $\mathcal{H}$ are uniquely defined.

Although this is encouraging, the kernel trick does not guarantee that regression on $\mb S$ will necessarily have good predictive properties for models of interest. To gain a better understanding, we must study the regularisation properties of the resemblance kernel itself: what characterises those elements of the associated RKHS $\mathcal{H}$ that have low norm and thus will be penalised less?

A direct analysis of the RKHS corresponding to the resemblance kernel in those terms seems challenging. We take a different approach and explicitly construct regression coefficients for $\mb S$ that approximate signals of interest. By showing that particular signals can be approximated well, we are indirectly discovering elements of $\mathcal{H}$ with low RKHS norm (see also Section~\ref{sec:RKHS_interpret} for more details).

\section{Approximation error}
\label{section:main}
In this section, we present results that bound the expected prediction error when performing regression on the reduced design matrix $\mb{S}$ in the contexts of the linear and logistic regression models. Note that throughout the rest of the manuscript, by $b$-bit min-wise hashing we are referring to the randomised variant described in Section~\ref{sec:shuffled}.
Let $q_i$ be the number of non-zero entries in the $i$th row of $\mb X$, and let $\delta_i=q_i/p$ be the row sparsity.
We will assume that the signal we wish to approximate for the $i$th observation takes the form
\begin{align} \label{eq:row_scale}
\kappa(\delta_i) \mb{x}_i^T \mbb{\beta}^*.
\end{align}
Here $\mbb{\beta}^* \in \R^p$ is an unknown vector of coefficients and the function $\kappa$ allows the $i$th linear predictor to be scaled in a way which depends on the number of non-zero entries in the $i$th row of $\mb X$.
Some normalisations of special interest include:
\begin{enumerate}[(a)]
\item $\kappa(\delta)$ constant. This yields standard linear or logistic regression models.
\item $\kappa(\delta)\propto \delta^{-1/2}$. In text analysis with a bag of words representation of documents, rows of $\mb{X}$ are often scaled to have the same $\ell_2$-norm to help balance situations when documents vary greatly in length \citep{row_normalise}. When $\mb X$ is binary, this is exactly achieved by  taking $\kappa(\delta) = p^{-1/2}\delta^{-1/2}$, so $\kappa(\delta_i)=q_i^{-1/2}$.
\item $\kappa(\delta)\propto \delta^{-1}$. This leads to a $\ell_1$-norm scaling as opposed to the $\ell_2$-norm scaling mentioned above. 
\end{enumerate}

Throughout we will assume that $\mb X \in [-1,1]^{n \times p}$, so the entries in $\mb X$ are bounded. This covers the important case of binary design but also allows for real-valued entries.
 
The first step in obtaining our prediction error results is to construct a vector $\mb b^*$ such that $\mb s_i^T \mb b^*$ is close to $\kappa(\delta_i)\mb x_i^T \mbb \beta^*$ on average.

\subsection{Un-scaled signals} \label{sec:un-scaled}
We will first consider un-scaled signals where $\kappa(\delta)$ in~\eqref{eq:row_scale} is a constant. Non-constant row-scaling is treated in more detail in the Section~\ref{sec:row_scaling}.
To begin with we will assume that $q_i=q \geq 1$ for all $i=1,\ldots,n$, a restriction which simplifies the results but highlights some interesting properties of $b$-bit min-wise hashing. Unequal row sparsity is treated in detail in the appendix in Section~\ref{sec:unequal} but a sketch of the results are given just below Theorem~\ref{thm:approx_error_main_binary}.

To simplify notation, we first introduce the following norm for $\mbb \beta\in \mathbb{R}^p$,
\begin{equation}\label{eq:norm_b} \norms{\mbb \beta}_b^2 :=  \norms{\mbb \beta}_2 ^2  + (2^b-2) \sum_{k=1} ^p \frac{\norms{\mb X_k}_2 ^2}{n} \beta_k^2 .\end{equation}
For $b=1$, we have of course that $\norms{\mbb \beta}_b^2= 2\norms{\mbb\beta}_2^2$. For larger values of $b$, the norm is influenced more heavily by the second term which can be seen to be the weighted version of the $\ell_2$-norm, where the weight of each variable is proportional to its squared $\ell_2$-norm. We will first discuss how well the original signal can be approximated with the column space of the matrix $\mb S$ generated by the $b$-bit min-wise hashing operation.

\begin{thm} \label{thm:approx_error_main_binary}
 Let $\mb S$ be the matrix generated by $b$-bit min-wise hashing.
Then there exists a vector $\mb{b}^{*} \in \R^{2^bL}$ with the following properties.
\begin{enumerate}[(i)]
 \item The approximation is unbiased: $\E_{\mbb{\pi}, \mbb{\Psi}}(\mb{S}\mb{b}^*) = \mb{X}\mbb{\beta}^*$.
  \item The norm is bounded by
 \begin{equation*}
 \E_{\mbb\pi, \mbb\Psi}(\|\mb b^*\|_2^2) \leq \frac{(2-\delta)q}{L(1-2^{-b})}\|\mbb\beta^*\|_2^2.
 \end{equation*}
 \item  The approximation error is bounded by
 \begin{equation*}
   \frac{1}{n} \E_{\mbb{\pi}, \mbb{\Psi}}(\norms{\mb{S}\mb{b}^* - \mb{X}\mbb{\beta}^*}_2 ^2) \leq \frac{(2-\delta)q}{2^bL(1-2^{-b})} \norms{\mbb \beta^*}_b ^2. 
  \end{equation*}
Specifically, for $b=1$, $\E_{\mbb{\pi}, \mbb{\Psi}}(\norms{\mb{S}\mb{b}^{*} - \mb{X}\mbb{\beta}^*}_2 ^2)/n  \leq 
 (2-\delta)q \norms{\mbb \beta^*}_2 ^2 / L.$
\end{enumerate}
\end{thm}
A form of the approximation error (iii) and the norm bound (ii) continue to be valid in the non-equal sparsity case under a mild restriction on the size of $L$, where we get instead of (iii) the bound
\begin{equation*}
   \frac{1}{n} \E_{\mbb{\pi}, \mbb{\Psi}}(\norms{\mb{S}\mb{b}^* - \mb{X}\mbb{\beta}^*}_2 ^2) \leq \frac{6\bar{q}}{2^bL(1-2^{-b})} \norms{\mbb \beta^*}_b ^2,
  \end{equation*}
where $\bar{q}$ is the the average of the $q_i$;
see Theorem~\ref{thm:unequal_sparsity} in the appendix for details.

The results above show that the signal $\mb X \mbb{\beta}^*$ can be well approximated by a linear combination of the columns in the matrix $\mb S$ if we generate a sufficiently large number of permutations $L$, especially for sparse data matrices. Another useful property of $\mb{b}^*$ here, aside from the approximation accuracy it delivers, is given in (ii): on average, $\norms{\mb{b}^*}_2 ^2$ is small when $L$ is large. This proves to be useful when studying the application of ridge regression.

This result has interesting implications for the resemblance kernel and its RKHS $\mathcal{H}$. In particular, it shows that if we constrain the input space to contain those vectors with sparsity $q$, linear functions $f_{\mbb\beta}$ defined by coefficients $\mbb\beta \in \R^p$ with $\sum_j \beta_j=0$ have RKHS norm satisfying $\|f_{\mbb\beta}\|_{\mathcal{H}}^2 \leq (2-\delta)q \|\mbb\beta\|_2^2$. As these properties of the RKHS are not directly used in any subsequent results, we defer formal presentation of these facts to Section~\ref{sec:RKHS_interpret} in the appendix.

Whilst the bound on the expectation of $\|\mb b^*\|_2^2$ is almost constant as $b$ changes, the approximation error bound (iii) does vary with $b$. Consider the case where $\mb X$ is binary and let $\gamma_k = \|\mb X_k\|_2^2/n$ be the column sparsity. Typically one would expect $\|\mbb\beta^*\|_2^2$ to be significantly larger than $\sum_{k=1}^p \gamma_k{\beta_k^*}^2$ and thus increasing $b$ by 1 almost halves the approximation error when $b$ is small.

A proof of Theorem~\ref{thm:approx_error_main_binary} is given in Section~\ref{app:approx} of the appendix; here we briefly sketch some of the main ideas. Note that
\begin{equation} \label{eq:sketch1}
\E_{\mbb\pi, \mbb\Psi}( \mb S\mb b^*) = \sum_{l=1}^L \E_{\mbb\pi, \mbb\Psi}\bigg(\sum_{c=1}^{2^b} \mb S_{lc}b^*_{lc}\bigg).
\end{equation}
We construct $\mb b^*$ with the following two properties: each of the $L$ blocks of $\mb b^*$ are i.i.d.\ with the $l$th block only depending on $\pi_l$ and $\mbb\Psi_l$; and each of the $L$ summands in \eqref{eq:sketch1} equals $\mb X\mbb\beta^* /L$. With each of the $L$ summands being unbiased in this way, we see that the approximation error is controlled by the variance of the sum; this variance scales as $1/L$ since the summands are i.i.d.

At first sight it may seem surprising that it is possible to exhibit a $\mb b^*$ with each block having the unbiasedness property discussed above. However, the following construction gives an indication of the possibilities. 
Using our convention that the $c$th component of the $l$th block of $\mb b^*$ is indexed as $b^*_{lc}:= b^*_{c+(l-1)2^b}$, consider taking
\begin{equation} \label{eq:sketch2}
b^*_{lc} =\frac{q}{L} \sum_{k=1}^p\beta^*_k \frac{\ind_{\{\Psi_{lk}=c\}}-2^{-b}}{1-2^{-b}}.
\end{equation}
Then writing $\mbb\psi = \mbb\Psi_1$, $\pi=\pi_1$, $H_i=H_{i1}$ we have
\begin{align}
\frac{L}{q}\E_{\pi,\mbb\psi} \bigg(\sum_{c=1}^{2^b} S_{lc}b^*_{1c}\bigg) &= \E_{\pi,\mbb\psi} \bigg(\sum_{c=1}^{2^b} \sum_{j=1}^p X_{ij}\ind_{\{H_i=j, \psi_j=c\}} \sum_{k=1}^p\beta^*_k \frac{\ind_{\{\psi_{k}=c\}}-2^{-b}}{1-2^{-b}}\bigg) \notag\\
&= \E_{\pi,\mbb\psi} \bigg( \sum_{j=1}^p X_{ij}\ind_{\{H_i=j\}} \sum_{k=1}^p\beta^*_k \frac{\ind_{\{\psi_{k}=\psi_j\}}-2^{-b}}{1-2^{-b}}\bigg) \label{eq:sketch3}.
\end{align}
Now since $\E_{\mbb\psi}\{(\ind_{\{\psi_{k}=\psi_j\}} -2^{-b})/(1-2^{-b})\}=\ind_{\{k=j\}}$ we see the above display equals
\begin{align*}
q \sum_{k=1}^p X_{ik}\beta^*_k \pr_{\pi}(H_i=k) = \mb X\mbb\beta^*.
\end{align*}
The final line uses the fact that for $k$ with $X_{ik} \neq 0$, $\pr_{\pi}(H_i=k)$ is the reciprocal of the number of non-zero entries in the $i$th row of $\mb X$; with our simplifying assumption of equal row sparsity, this is precisely $1/q$. Note one could scale the rows of $\mb S$ according to the number of non-zeroes in each row to achieve unbiasedness in the case of unequal row sparsity. However as shown in Section~\ref{sec:unequal}, it turns out that by incurring some bias one can still keep the approximation error low even in this situation without having to perform any sort of scaling. 

The form of $\mb b^*$ used in the proof of Theorem~\ref{thm:approx_error_main_binary} differs slightly from that in \eqref{eq:sketch2} by introducing a random weight multiplying each coefficient  that decays as $\pi_l(k)$ increases. This reduces the variance and yields the approximation error in (iii) that has a factor $q$ rather than the factor of $p$ which would be obtained from \eqref{eq:sketch2}.

\subsection{Row-scaled signals} \label{sec:row_scaling}
We now turn to the more general setting with unequal row sparsity and signal given by \eqref{eq:row_scale}.
We consider the family of scaling functions
 $\delta \mapsto (\delta_{\min}/\delta)^a$ where $\delta_{\min}=\min_i \delta_i$, for $1/2 \leq a \leq 1$. Including $\delta_{\min}$ in the scaling functions means that were the row sparsity to be equal, the approximation error here would be of the same form as that considered in Theorems~\ref{thm:approx_error_main_binary}. We could alternatively replace $\delta_{\min}$ with the average of the $\delta_i$ for the same effect, but using $\delta_{\min}$ helps to simplify the results. Writing $q_{\min} = \min_i q_i$, we have the following results.

\begin{thm}
\label{thm:scaling_adapt}
Let $L\geq 5$ and assume $\delta_{\min} \leq 1/2$  if $a=1/2$, and $L>2/(2a-1)$  if $a>1/2$. Then there exists $\mb b^* \in \R^L$ depending on $a$ such that the approximation error satisfies
\begin{align*}
& \frac{1}{n}\sum_{i=1}^n \E_{\mbb\pi, \mbb\Psi}[\{ (\delta_{\min}/\delta_i)^a\mb x_i^T\mbb\beta^* - \mb s_i^T \mb b^*\}^2] \leq \\ 
&\qquad \begin{cases}
\dfrac{q_{\min}}{2^b L (1-2^{-b})}\|\mbb\beta^*\|_b^2\log\{4\log(L)/\delta_{\min}\} \quad & \text{if } \; a=1/2,\\
&\\
\dfrac{q_{\min}}{2^b L (1-2^{-b})} \|\mbb\beta^*\|_b^2 \dfrac{1}{2a-1} [\log\{2(2a-1)L\}]^{2a-1}
 \quad & \text{if }\; 1/2 <a\leq 1,
\end{cases}
\end{align*}
and the norm of $\mb b^*$ is bounded in expectation by 
\[
\E_{\mbb\pi, \mbb\Psi}(\|\mb b ^*\|_2^2) \leq \begin{cases}
\dfrac{q_{\min}\log\{4\log(L)/\delta_{\min}\}}{L(1-2^{-b})}\|\mbb\beta^*\|_2^2 \qquad & \text{if } \; a=1/2,\\
&\\
\dfrac{1}{2a-1}\dfrac{q_{\min}[\log\{2(2a-1)L\}]^{2a-1}}{L(1-2^{-b})} \|\mbb\beta^*\|_2^2
 \qquad & \text{if }\; 1/2 <a\leq 1.
\end{cases}
\]
\end{thm}
The min-wise hashing based dimension reduction scheme appears to be well-suited to approximating signals scaled by a power of the sparsity, with the approximation error only incurring a further multiplicative term involving $\log(L)$ compared to the results of Theorem~\ref{thm:approx_error_main_binary}.

We now briefly outline how we construct coefficient vectors $\mb b^*$ achieving the bounds above. Consider the following refinement of \eqref{eq:sketch2}:
\begin{equation*}
b^*_{lc}= \frac{1}{L} \sum_{k=1}^p\beta_k^* \frac{\ind_{\{\Psi_{lk}=c\}} -2^{-b}}{1-2^{-b}} w_{\pi_l(k)},
\end{equation*}
where $\mb w \in \R^p$ is a vector of non-negative weights. Arguing as in \eqref{eq:sketch3} but replacing $q\beta_k^*$ with $\beta_k^*w_{\pi(k)}$ we arrive at
\[
L\E_{\pi,\mbb\psi} \bigg(\sum_{c=1}^{2^b} S_{lc}b^*_{1c}\bigg) = \sum_{k=1}^p X_{ik}\beta^*_k \E_\pi (\ind_{\{H_i=k\}}w_{\pi(k)}).
\]
Recall that writing $M_i=M_{i1}$, $M_i = \pi(H_i)$, the position of the first non-zero entry in row $i$ under permutation $\pi$. Note that $H_i$ and $M_i$ are independent. Now for large $p$, $M_i$ behaves roughly like a geometric random variable with parameter $\delta_i$. Thus for $k$ with $X_{ik} \neq 0$,
\[
\E_\pi (\ind_{\{H_i=k\}}w_{\pi(k)}) = \E_\pi(\ind_{\{H_i=k\}} w_{M_i}) \approx \frac{1}{p\delta_i}\sum_{\ell=1}^p w_\ell \delta_i (1-\delta_i)^{\ell-1} = \frac{1}{p}\sum_{\ell=1}^p w_\ell (1-\delta_i)^{\ell-1}.
\]
If $w_{\ell+1} = p(-1)^\ell\kappa^{(\ell)}(1)/\ell!$ we see that the RHS resembles a Taylor series of $\kappa(\delta_i)$ about 1. In this way we can approximate a large family of row-scaled signals.

\section{Prediction error}
The approximation error results in the three previous sections allow us to derive bounds on the prediction errors for linear and logistic regression models with potentially row-scaled data. Here we will present results under the assumption of $q$ non-zero entries per row and also where the scaling function $\kappa$ is proportional to the square-root function 
\begin{equation} \label{eq:kappa_0}
\kappa_0(\delta)=\sqrt{\delta_{\min}/\delta}.
\end{equation}
However, all of the approximation error results can be extended to results on prediction error via general theorems on prediction error we present in Section~\ref{sec:pred_res}. In particular, Theorem~\ref{thm:unequal_sparsity} can be used to show that versions of the equal row sparsity results hold more generally with $q$ replaced by the average number of non-zeroes per row $\bar{q}$ provided $L$ is not excessively large.

\subsection{Linear regression models} \label{sec:OLS_main}
Assume we have the following approximately linear model:
\begin{equation} \label{eq:linmod}
 Y_i = \alpha^* + \kappa(\delta_i) \mb x_i^T\mbb{\beta}^* + \varepsilon_i, \qquad 1=1,\ldots,n.
\end{equation}
Here $\alpha^*$ is the intercept and $\mb x_i \in [-1,1]^p$. We assume that the random noise $\mbb{\varepsilon} \in \R^n$ satisfies $\E (\varepsilon_i) = 0$, 
$\E(\varepsilon_i ^2) = \sigma^2$ and $\Cov(\varepsilon_i, \varepsilon_j)=0$. 

Our results here give bounds on a mean-squared prediction error (MSPE) of the form
\begin{equation} \label{eq:MSPE}
\MSPE((\hat{\alpha}, \hat{\mb b})) := \frac{1}{n}\sum_{i=1}^n \E_{\mbb \varepsilon, \mbb \pi,\mbb \Psi} \{(\alpha^* + \kappa(\delta_i) \mb x_i^T\mbb{\beta}^* - \hat{\alpha} - \mb S \hat{\mb b})^2\}
\end{equation}
where $\hat{\alpha}$ and $\hat{\mb b}$ are the estimated intercept and regression coefficients arising from regression on $\mb S$.
Note we consider a denoising-type error: the error on the data used to fit the regression coefficients. 
 Bounds on the prediction error at new observations would require conditions on the distribution of observations and we have avoided making any such assumptions for the results here. 

\subsubsection{Ordinary least squares} \label{sec:main_OLS_MSPE} 
Perhaps the simplest way to estimate the linear model is to apply a least squares estimator,
\begin{equation} \label{eq:est_ols}
(\hat{\alpha}, \hat{\mb b}) := \argmin{(\alpha, \mb b) \in \R \times \mathbb{R}^{2^bL}} \|\mb Y - \alpha \mb1 - \mb S \mb b\|_2^2 ,
\end{equation}
to the matrix $\mb S$.
We have the following theorem.

 


\begin{thm} \label{thm:main_OLS_pred_error}
Let $(\hat{\alpha}, \hat{\mb b})$ be the least squares estimator (\ref{eq:est_ols}). We have the bound
 \begin{equation*}
  \MSPE ((\hat{\alpha}, \hat{\mb{b}})) \;\leq \;  \frac{C }{2^bL(1-2^{-b})} \; \norms{\mbb \beta^*}_b ^2   + 2^b L \frac{\sigma^2}{n} .
 \end{equation*}
For equal row sparsity $\delta$ we have $ C=(2-\delta)q.$  For unequal row sparsity, when $\kappa=\kappa_0$ as in  \eqref{eq:kappa_0}, the result holds for 
$ C=q_{\min}\log\{ 4\log(L)/\delta_{\min} \} .$
\end{thm}

An optimal choice $L_b^*$ of $L$ will balance the approximation error and variance contributions (first and second term on the right hand side respectively). In the equal row sparsity we arrive at
\[
L_b^* = \frac{\sqrt{(2-\delta)q n}}{2^b\sqrt{1-2^{-b}}} \|\mbb\beta^*\|_b 
\]
which yields an optimal MSPE of the order $\sigma \sqrt{q/n} \|\mbb\beta^*\|_b$. 
If we ignore log terms the rate is  analogous in the case of uneven row-sparsity. The slow rate in $n$ seems unavoidable if we do not make stronger conditions on the design. Indeed, a similar error rate is obtained in Theorem 21 of \citet{maillardmunos2012} and in \citet{kaban2014new} for OLS following dimension reduction by random projections. More precisely: projecting $K$ times with a random projection, followed by an OLS estimation is shown in \citet{kaban2014new} to lead to a bound on $\MSPE$ of
\begin{equation} \label{eq:Kaban}
 \frac{1 }{K} \; \norms{\mbb \beta^*}_\kappa^2   + K \frac{\sigma^2}{n} ,
 \end{equation}
where the norm $\norms{\cdot}_\kappa$ depends on the eigenvalue structure of the design matrix. In contrast the bound we have above for min-wise hashing depends in contrast on the sparsity $q$ through the constant $C$. The bound \eqref{eq:Kaban} is otherwise structurally identical to the bound for $b$-bit min-wise hashing above, and the role of the number $L$ of projections is now taken by the number $K$ of random projections. The optimal values of $K$ and $L$ are both of order $\sqrt{n}$, leading to the same convergence rate of the risk as $n\rightarrow\infty$.

To better understand the implications of Theorem~\ref{thm:main_OLS_pred_error}, it is helpful to fix the size of the signal so that $\|\mb X\mbb \beta^*\|_2^2/n=1$, and look at whether we can show consistency of the method as both $p,n\rightarrow\infty$.
If the signal is spread out and all variables have the same sparsity, $\|\mbb \beta^*\|_b$ will be of order $\sqrt{p/q}$ and the MSPE will vanish when $p/n\rightarrow 0$, which excludes the high-dimensional setting. 

However, now assume that the signal is concentrated on a fixed set of  variables. The norm $\|\mbb \beta^*\|_b$ is then constant as $p$ increases and all that is required for consistency is $q/n\rightarrow 0$ (or $q_{\min}/n\rightarrow 0$ for the more general case of uneven row-sparsity). 
 
An interesting scenario is one of increasing variable sparseness. In many applications, the more predictor variables are added the sparser they tend to become. In text analysis, the first block of predictor variables might encode the presence of individual words. The next block might code for bigrams and the following, higher order $N$-grams. With this design, predictor variables in each successive block become sparser than the previous. It is then interesting to consider how much the MSPE can increase if we add a block with many sparse variables which contain no additional signal contribution. The result above indicates that the MSPE only increases as $\sqrt{q}$. Adding a block of several million (sparse) bigrams might thus have the same statistical effect as adding several thousand (denser) unigrams (individual words).

We now comment the optimal choice of $L$ and computational complexity. If we assume fixed $\|\mbb\beta^*\|_2$ and $n = O(q)$, which is all that would be required to keep the prediction error bounded asymptotically, then the optimal dimension of the min-wise projection scales as  $L_b^* = O(q)$, considering $b$ fixed here. This dimension  will in general be a substantial reduction over the original dimension of the data, $p$, and would result in a correspondingly large reduction in the computational cost of regression. Indeed, ridge regression or the LAR algorithm \citep{efron04least} applied to $\mb X$ would have complexity $O(q^2 p)$, and one would expect that the Lasso \citep{tibshirani96regression} would have similar computational cost. In contrast, OLS applied to $\mb S$ would only require $O(q^3)$ operations, an improvement of $q/p$.
The discussion above considered an optimal choice of $L \approx L_b^*$.  Even if we cannot afford to work with the  optimal dimension $L_b^*$ for computational reasons, the bound will still be useful for smaller values of $L$. The guarantee on prediction accuracy could not be obtained if, for example, simply a random subset of $L$ predictors were chosen and the remaining ones discarded.

The dependence of the bound on $b$ is also interesting: a minimum value occurs for $b=1$. However, this would imply a larger value of $L^*_b$. Note the memory requirement for storing $\mb S$ would be $O(nL^*_b b)$ as $b$ bits would be required to store the locations of each of the $nL^*_b$ nonzeroes. We see that with a constraint on $nbL$ or on the number of permutations $L$, larger values of $b$ are more favourable, particularly with high sparsity, as this would tend to make $\|\mbb\beta^*\|_b$ not much larger than $\|\mbb\beta^*\|_2$. A different perspective on the optimal choice of $b$ based on the variance of inner products of rows of $\mb S$ is taken in \citet{li2011theory}, with similar conclusions.

\subsubsection{Ridge regression}

Instead of using a least-squares estimator on the transformed data matrix $\mb S$ we can also apply ridge regression \citep{hoerl1970ridge}. 
For a given $\lambda > 0$, the regression coefficients are found by
\begin{equation}
\label{eq:main_ridge_def}
  (\hat{\alpha}_\lambda, \hat{\mb{b}}_\lambda) := \argmin{(\alpha, \mb{b}) \in \R \times \R^L} \norms{\mb{Y} - \hat{\alpha} \mb{1} - \mb{S}\mb{b}}_2 ^2 \; \text{ such that } \; \norms{\mb{b}}_2 ^2 \leq \lambda,
\end{equation}
The theorem below gives a bound on the MSPE of $(\hat{\alpha}, \hat{\mb{b}}_\lambda)$.

\begin{thm} \label{thm:main_ridge_pred_error}

There exist regularisation parameters $\lambda$ depending on $\mbb\beta^*$ and $\mb S$ such that 
 \begin{equation*}
  \MSPE ((\hat{\alpha}_\lambda, \hat{\mb{b}}_\lambda)) \;\leq \; \sigma \sqrt{\frac{ 2 C }{(1-2^{-b})n}}\|\mbb\beta^*\|_2 +  \frac{C}{2^bL (1-2^{-b})} \norms{\mbb \beta^*}_b ^2 +\frac{\sigma^2}{n}.
 \end{equation*}
Here the value of  $C$ is defined as in Theorem~\ref{thm:main_OLS_pred_error} by
$ C=(2-\delta)q $ for equal row sparsity $\delta$  and  $ C = q_{\min}\log\{4\log(L)/\delta_{\min}\} $ for $\kappa=\kappa_0$ and unequal row-sparsity.
\end{thm}
The ridge regression result for large $L$ is similar to that for OLS with an optimal $L^*_b$, though there is a small difference: the leading terms are $\sigma \|\mbb\beta^*\|_2 \sqrt{q/n}$ and $\sigma \|\mbb\beta^*\|_b \sqrt{q/n}$ respectively. Ridge regression takes advantage of the fact that not only do we have a $\mb b^*$ such that $\mb S \mb b^*$ and $\mb X\mbb\beta^*$ are close, we also know that there is a $\mb b^*$ with this property that has low $\ell_2$-norm. Our bound on the expected squared $\ell_2$-norm of $\mb b^*$ ((ii) in Theorem~\ref{thm:approx_error_main_binary}) does not depend much on $b$.
In contrast, OLS only makes use of the approximation error result, (iii) in Theorem~\ref{thm:approx_error_main_binary}.

Note that when $L$ is large, regardless of the value of $b$, ridge regression on $\mb S$ approximates a kernel ridge regression using the resemblance kernel (see Section~\ref{sec:Jaccard}). The MSPE of a kernel ridge regression with the resemblance kernel should of course not depend on $b$, and this observation largely agrees with our result.

Another key difference between ridge regression and OLS here is the following: achieving a good prediction error with OLS hinges on a careful choice of $L$. In contrast, with ridge regression, $L$ can (and should) be chosen very large, from a purely statistical point of view. However, the constraint on the $\ell_2$-norm of $\hat{\mb b}$ needs to be chosen carefully with ridge regression, typically by cross-validation. In practice, the number $L$ of dimensions can be chosen as large as possible according to the available computational budget.

\subsection{Logistic regression}
We give an analogous result to Theorem~\ref{thm:main_ridge_pred_error} for classification problems under logistic loss. 
Let $\mb x_i \in [-1, 1]^{p}$ and let $\mb Y \in \{0,1\}^n$ be an associated vector of class labels. We assume the model
\begin{equation} \label{eq:main_logistic_mod}
 Y_i \sim \mathrm{Bernoulli}(p_i); \qquad \log\left( \frac{p_i}{1 - p_i} \right) = \kappa(\delta_i)\mb{x}_i ^T \mbb{\beta}^*, 
\end{equation}
with the $Y_i$ independent for $i=1,\ldots,n$. Note that we have omitted the separate intercept term for simplicity.

Here we  consider a linear classifier constructed by $\ell_2$-constrained logistic regression. One can obtain a similar result for unconstrained logistic regression based on Lemma~6.6 of \citet{buhlmann2011statistics}, but we do not pursue this further here. Define
 \begin{equation} \label{eq:logistic_def}
  \hat{\mb{b}}_\lambda = \argmin{\mb{b}} \frac{1}{n}\sum_{i=1} ^n \left[ -Y_i \mb{s}_i ^T \mb{b} + \log\{1+\exp(\mb{s}_i ^T \mb{b})\} \right] \; \text{ such that } \; \norms{\mb{b}}_2 ^2 \leq \lambda.
 \end{equation}
 Let $\mathcal{E}(\hat{\mb{b}}_\lambda)$ denote the excess risk of $\hat{\mb{b}}_\lambda$ under logistic loss, so
 \begin{equation} \label{eq:excess_risk}
  \mathcal{E}(\hat{\mb{b}}_\lambda) = \frac{1}{n} \sum_{i=1} ^n \left[ -p_i \mb{s}_i ^T \hat{\mb{b}}_\lambda + \log\{1+\exp(\mb{s}_i ^T \hat{\mb{b}}_\lambda)\} \right] - \frac{1}{n} \sum_{i=1} ^n \left[ -p_i \kappa(\delta_i)\mb{x}_i ^T \mbb{\beta}^* + \log\{1+\exp(\kappa(\delta_i)\mb{x}_i ^T \mbb{\beta}^*)\} \right].
 \end{equation}
We can now state the analogous result to Theorem~\ref{thm:main_ridge_pred_error}.
\begin{thm} \label{thm:main_logistic}
 Define $\tilde{p} \in \R$ by
 \begin{equation} \label{eq:def_p_tilde}
  \tilde{p} := \frac{1}{n}\sum_{i=1} ^n p_i (1-p_i) \leq \frac{1}{2}.
 \end{equation}
Then we have that there exists a $\lambda$ depending $\mbb\beta^*$ and $\mb S$ such that
\begin{equation*}
\E_{\mb{Y}, \mbb{\pi}, \mbb{\Psi}}\{\mathcal{E}(\hat{\mb{b}}_\lambda)\} \leq \sqrt{\frac{ 2\tilde{p}C }{(1-2^{-b})n}}\|\mbb\beta^*\|_2 + \frac{C}{2^{b+2}L (1-2^{-b})} \norms{\mbb \beta^*}_b ^2.
\end{equation*}
Here the value of  $C$ is defined as in Theorem~\ref{thm:main_OLS_pred_error} by
$ C=(2-\delta)q $ for equal row sparsity $\delta$  and  $ C = q_{\min}\log\{4\log(L)/\delta_{\min}\} $ for $\kappa=\kappa_0$ and unequal row-sparsity.
\end{thm}
The result illustrates that the usefulness of $b$-bit min-wise hashing is not limited to regression problems. In fact, most applications of are classification problems \citep{li2011theory} and our analysis of $b$-bit min-wise hashing here gives a theoretical explanation for its performance in these cases.

\section{Interaction models} \label{section:interaction}
One of the compelling aspects of regression and classification with $b$-bit min-wise hashing is the fact that a particular form of interactions between variables can be fitted. This does not require any change in the procedure other than a possible increase in $L$. To be clear, in order to capture interactions with $b$-bit min-wise hashing, just as in the main effects case, we create a reduced matrix $\mb S$ and then fit a main effects model to $\mb S$.
The dimension of the compressed data, $2^b L$, can still be substantially smaller than the $O(p^2)$ number of coefficients that would need to be estimated if the interactions were modelled in the conventional way, and so the resulting computational advantage can be very large. 

Note that in situations where the number of original predictors, $p$, may be manageable, including interactions explicitly can quickly become computationally infeasible. For example, if we start with, $10^5$ variables, the two-way interactions number more than a billion. For larger values of $p$, even methods such as Random Forest \citep{breiman01random} or Rule Ensembles \citep{friedman2005plv} would suffer similar computational problems.


We now describe a type of interaction model that can be fitted with $b$-bit min-wise hashing. Let $\mb f ^* \in \R^n$ be given by
\begin{equation} \label{eq:int_model}
 f^* _i = \sum_{k=1} ^p X_{ik} \theta^{*, (1)} _k + \sum_{k, k_1 = 1} ^p X_{ik} \ind_{\{X_{ik_1} = 0\}} \Theta^{*, (2)} _{k, k_1}, \;\; i=1, \ldots, n,
\end{equation}
where $\mbb \theta^{*, (1)} \in \R^p$ is a vector of coefficients for the main effects terms, and $\mbb \Theta^{*, (2)} \in \R^{p \times p}$ is a matrix of coefficients for interactions whose diagonal entries are zero. As elsewhere in the paper, throughout this section we will assume that $\mb X \in [-1,1]^{n\times p}$. Note that if $\mb{X}$ were a binary matrix, then \eqref{eq:int_model} parametrises (in fact over-parametrises) all linear combinations of bivariate functions of predictors; that is all possible two-way interactions are included in the model.

In general, the interaction model includes the tensor product of the set of original variables with the columns of an $n \times p$ matrix with $ik$th entry $\ind_{\{X_{ik} = 0\}}$. The value zero is thus given a special status and the model seems particularly appropriate in the sparse design setting we are considering here.

\subsection{Approximation error}
We will assume that the number of non-zero entries in each row of $\mb X$ is $q\geq 1$. However, we believe our proof techniques can be extended to the unequal sparsity and unknown row scaling scenario dealt with in Section~\ref{sec:row_scaling}. Furthermore, for technical reasons, we assume here that $p \geq 3$.

Let $\mbb\Theta^*$ collect together $\mbb\theta^{*,(1)}$ and $\mbb\Theta^{*,(2)}$ and define the following norms analogously to~\eqref{eq:norm_b}:
\begin{align}
 \| \mbb\Theta^{*} \|  := \;&  \|\mbb{\theta}^{*,(1)}\|_2 + \bigg(2(2-\delta)q\sum_{k,k_1,k_2} \abs{\Theta^{*,{(2)}}_{k k_1} \Theta^{*,{(2)}}_{k k_2}}\bigg)^{1/2},  \label{eq:intnorm1} \\
  \| \mbb\Theta^{*} \|_b  := \; & \|\mbb{\theta}^{*,(1)}\|_b  + \bigg\{2(2-\delta)q\bigg(\sum_{k,k_1,k_2} \abs{\Theta^{*,{(2)}}_{k k_1} \Theta^{*,{(2)}}_{k k_2}} + \delta (2^b-2)  \sum_{k,k_1,k_2} \frac{\|\mb X_k\|_2^2}{n} \abs{\Theta^{*,{(2)}}_{k k_1} \Theta^{*,{(2)}}_{k k_2}}\bigg)\bigg\}^{1/2}. \label{eq:intnorm2}
\end{align}

\begin{thm} \label{thm:approx_error_inter}
Suppose we have exactly $q$ non-zero entries in each row of $\mb X$. Then there exists a vector $\mb b^* \in \R^{2^b L}$ with the following properties: 
\begin{enumerate}[(i)]
 \item The approximation is unbiased, $\E_{\mbb{\pi}, \mbb{\Psi}}(\mb{S}\mb{b}^*) = \mb f^*$.
 \item The $\ell_2$-norm is bounded by
 \[
 \E_{\mbb{\pi}, \mbb{\Psi}}(\norms{\mb{b}^*}_2 ^2) \leq \frac{(2-\delta)q}{L(1-2^{-b})} \|\mbb\Theta^*\|^2.
 \]
 \item The approximation error is bounded by \begin{align*}
 &\E_{\mbb{\pi}, \mbb{\Psi}}(\norms{\mb{S}\mb{b}^* - \mb{f}^*}_2 ^2)/n \leq \frac{(2-\delta)q}{2^bL(1-2^{-b})}\|\mbb\Theta^*\|_b^2. 
 \end{align*}
\end{enumerate}
\end{thm}
The bound on the approximation error in (iii) is most suited to situations where there are a fixed number of interaction terms, so
\begin{equation} \label{eq:int-norm_fix}
\sum_{k, k_1, k_2} \abs{\Theta^{*,{(2)}} _{k k_1} \Theta^{*,{(2)}} _{k k_2}} = O(1). 
\end{equation}
Then we see that the contribution of the interaction terms to the bound on the approximation error is of order $q^2$. On the other hand, if we are considering a growing number of many small interaction terms, much tighter bounds than that given by (iii) can be obtained.
The bounds above show in particular that the form of function given by \eqref{eq:int_model} lies in the RKHS of the resemblance kernel and its RKHS norm is upper bounded by $(2-\delta)q \|\mbb\Theta^*\|^2$; further details are given in the appendix Section~\ref{sec:RKHS_interpret}.

The results for interaction models corresponding to Theorems \ref{thm:main_OLS_pred_error}, \ref{thm:main_ridge_pred_error} and \ref{thm:main_logistic} now follow.
\subsection{Prediction error}
We now present results for linear and logistic regression models where the signal involves interactions.
\subsubsection{Linear regression models}
Assume the model \eqref{eq:linmod} and define the MSPE by \eqref{eq:MSPE} but in both cases with $\mb X \mbb \beta^*$ now replaced by $\mb f^*$ \eqref{eq:int_model}. As in the previous section, we will assume that $\mb X$ has $q$ non-zero entries in each row. When OLS estimation is used, we have the following result.

\begin{thm} \label{thm:interaction_OLS_pred_error}
 Let $(\hat{\alpha}, \hat{\mb b})$ be the least squares estimator (\ref{eq:est_ols}). Then
 \begin{equation*}
  \MSPE ((\hat{\alpha}, \hat{\mb{b}})) \;\leq \; \frac{(2-\delta)q}{2^bL(1-2^{-b})}\|\mbb\Theta^*\|_b^2 + 2^b L \frac{\sigma^2}{n}.
 \end{equation*}
\end{thm}
To interpret the result, consider a situation where there are a fixed number of interaction and main effects of fixed size, so in particular \eqref{eq:int-norm_fix} holds. Then treating $b$ as fixed, the optimal $L$, $L^*=O(\sqrt{q^2 n/\sigma})$. If $n, q $ and $p$ increase by collecting new data and adding uninformative variables, then in order for the MSPE to vanish asymptotically, we require $q^2 / n \to 0$. Compare this to the corresponding requirement of OLS applied to $\mb X$, that $p^2 / n \to 0$. Particularly in situations of increasing variable sparseness, as discussed in Section~\ref{sec:main_OLS_MSPE}, this can amount to a large statistical advantage.

The computational gains can be equally great.
If, for example, $n \approx q^2$, then $L^*=O(q^2)$. If ridge regression were applied to $\mb X$ augmented by $O(p^2)$ interaction terms, the number of operations required would be $O(p^2 q^4)$; OLS using $\mb S$ has complexity $O(q^6)$. If instead $n \approx p^2$, then regression with explicitly coded interaction terms would have complexity $O(p^6)$, whilst with the compressed data this would be reduced to $O(p^4 q^2)$. 

As in the main effects case, the ridge regression result is similar.
\begin{thm} \label{thm:interaction_ridge_pred_error}
Let the ridge regression estimator be given by \eqref{eq:main_ridge_def}.
There exists $\lambda$ depending on $\mb f^*$ and $\mb S$ such that
we have
 \begin{align*}
  \MSPE ((\hat{\alpha}, \hat{\mb{b}})) \;\leq \; & \sigma\sqrt{\frac{(2-\delta)q}{n(1-2^{-b})}} \|\mbb\Theta^*\| + \frac{(2-\delta)q}{2^bL(1-2^{-b})}\|\mbb\Theta^*\|_b^2  + \frac{\sigma^2}{n}.
 \end{align*}
\end{thm}
Similarly to Theorem~\ref{thm:main_ridge_pred_error} the result here suggests choosing a large $L$ is always better from a statistical point of view. However,  for computational reasons, it may not be possible to take $L$ much larger than $L^*$.

\subsubsection{Logistic regression}
Here we assume the model~\eqref{eq:main_logistic_mod} and define the excess risk by \eqref{eq:excess_risk}, but in both cases with $\mb X \mbb \beta^*$ replaced by $\mb f^*$.
\begin{thm} \label{thm:interaction_logistic}
Define $\tilde{p} \in \R$ as in \eqref{eq:def_p_tilde} and the $\ell_2$-penalised logistic regression estimator as in \eqref{eq:logistic_def}.
Then we have that there exists $\lambda$ such that
\begin{align*}
\E_{\mb{Y}, \mbb{\pi}, \mbb{\Psi}}\{\mathcal{E}(\hat{\mb{b}}_\lambda)\} &\leq  
\sigma\sqrt{\frac{\tilde{p}(2-\delta)q}{n(1-2^{-b})}} \|\mbb\Theta^*\|+\frac{(2-\delta)q}{2^{b+2}L(1-2^{-b})}\|\mbb\Theta^*\|_b^2.
\end{align*}
\end{thm}

One could continue to look at higher-order interaction models by adding three-way interactions in \eqref{eq:int_model} and adapting \eqref{eq:intnorm1} and \eqref{eq:intnorm2} in suitable ways. However, being able to show that two-way interaction models can be fitted with $b$-bit min-wise hashing may well be sufficient for most applications.

\section{Extensions}
\label{section:extensions}

\subsection{Variable importance} \label{sec:var_imp}
Typically prediction, rather than model selection, is the primary goal in large-scale applications with sparse data, one reason for this being that we cannot  expect a very small subset of variables to approximate the signal well when the design matrix is sparse. Nevertheless, it is often illuminating to study the influence of specific variables or look for the variables that have the largest influence on predictions. Indeed, such study is often undertaken following applications of Random Forest \citep{breiman01random}, where several variable importance measures allow practitioners to better interpret the fits produced.

We now describe how importance measures can be obtained for $b$-bit min-wise hashing as described in Section~\ref{sec:shuffled}. Let $\hat{f}:\R^p \to \R$ be the regression function created following regression on $b$-bit min-wise hashed data, and let $\hat{f} _i := \hat{f}(\mb x_i)$. Furthermore, for $k=1,\ldots, p$, let $\hat{f}^{(-k)}:= \hat{f}(\mb x_i ^{(-k)})$, where $\mb x_i ^{(-k)}$ is equal to $\mb x_i$ but with $k$th component set to zero.

The vector $\hat{\mb f} - \hat{\mb f}^{(-k)}$ is the difference in predictions obtained when fitting to $\mb X$, and those obtained when fitting to $\mb X$ with the $k$th column set to zero.
When the underlying model in $\mb X$ contains only main effects \eqref{eq:linmod} and no structural error is present, we might expect that
\[
 \hat{\mb f} - \hat{\mb f}^{(-k)} \approx \mbb \beta_k ^* \mb X_k.
\]
To obtain a measure of variable importance, one could look at the $\ell_2$-norm of $\hat{\mb f} - \hat{\mb f}^{(-k)}$, for example \citep{breiman01random}.


The difference in predictions can be computed relatively easily by considering the $n \times 2^b L$ matrix $\tilde{\mb S}$ with entries given by  $\tilde{S}_{ilc} = \tilde{S}_{i(c+(l-1)2^b}= X_{i \tilde{H}_{il}} \ind_{\{\Psi_{\tilde{H}_{il} l}=c\}}$, where
\[
 \tilde{H}_{il} := \argmin{k \in \mb z_i \setminus H_{il}} \pi_l (k).
\]
Thus $\tilde{H}_{il}$ is the variable index in $\mb z_i$ whose value under permutation $\pi_l$ is second smallest among $\{\pi_l(k):k \in \mb z _i\}$. If $\mb z_i \setminus H_{il} = \emptyset$, we simply set $\tilde{S}_{il}=0$.
Then
\begin{equation} \label{eq:varinfluence}
 \hat{f}_i - \hat{f}^{(-k)} _i = \sum_{l=1} ^L \ind_{\{H_{il}=k\}} \sum_{c=1}^{2^b} (S_{ilc} -\tilde{S}_{ilc})  \hat{b} _{lc}.
\end{equation}
Note that we only need to store the $n\times L$ matrix $\mb H$ and $n \times 2^bL$ matrices $\mb S$ and $\tilde{\mb S}$ to compute the variable importance for all variables; moreover the latter matrices only have at most $nL$ non-zero entries each.

Interaction effects are not directly visible, but do manifest themselves in the form of a higher variability among $\{\hat{f}_i - \hat{f}^{(-k)} _i : \mb x_i \approx \mb x\}$, for any given value of $\mb x$, if variable $k$ is involved in an interaction term. In principle, one could attempt to detect this increased variability, but further investigation of this is beyond the scope of the current work. 


\subsection{Other fitting procedures}
Here we have only considered OLS, ridge regression and $\ell_2$-penalised logistic regression as prediction methods after reducing the design matrix.
However, it is also conceivable that other fitting procedures could be suitable. In particular, it would be interesting to look at matching pursuit, boosting and the Lasso, for which results in \citep{tropp2004gga,buhlmann2006bhd,van2008high} could be leveraged. Matching pursuit would have the computational advantage that the entire $\mb S$ matrix would not need to be held in memory. Instead, one could create the columns  during the fitting process. Such an approach may be useful for problems where the dimension of the hashing-matrix, $2^b L$, needs to be very large to achieve a desired predictive accuracy.

\section{Discussion} \label{sec:discussion}
In this paper we have derived approximation error bounds for $b$-bit min-wise hashing. We were able to show that not only does $b$-bit min-wise hashing take advantage of sparsity in the design matrix computationally, it is also able to exploit this for improved statistical performance. In particular, the MSPE of regression following dimension reduction by $b$-bit min-wise hashing is of the form $\sqrt{q/n} \|\mbb \beta^*\|_2$ if the data follow a linear model with coefficient vector $\mbb \beta^*$ and $q$ is the average number of non-zero variables for an
observation.
The linear model can then be well-approximated by the low-dimensional $b$-bit min-wise hashed data if the norm of $\|\mbb \beta^*\|_2$ is low, as occurs, for example if the signal is approximately replicated in distinct blocks of variables.

In addition, we have shown that more complicated models such as interaction models can be fitted by a regression on the hashed data matrix that contains only main effects. Though a larger dimension  $L$ of the hashed data may be required than when approximating a main effects model, no further changes are needed to the procedure.

These bounds also reveal some of the predictive properties of the resemblance kernel, and provide an insight into the sorts of regression functions that have small norm in its associated RKHS. More generally, we believe that random feature expansions may well be useful as a theoretical tool to understand properties of otherwise intractable kernels. We expect to see more extensions and applications $b$-bit min-wise hashing and other random feature expansions, both as computational and theoretical tools, in the future.

\appendix


\section{Approximation error results} \label{app:approx}
In this section we prove results on the approximation error presented in the main text (Theorems~\ref{thm:approx_error_main_binary}, \ref{thm:scaling_adapt} and \ref{thm:approx_error_inter}) as well as an additional result on the approximation error of linear signals when row sparsity is not necessarily equal (Theorem~\ref{thm:unequal_sparsity}).
\subsection{Preliminary results}
We will let $q_i$ be the number of non-zeroes in the $i$th row of $\mb X$ and define $\delta_i=q_i/p$. We will assume that $q_i \geq 1$ for all $i$.
For the proofs of results on approximation error in settings with just main effects, we will make use of the following lemma. This lemma formalises the ideas of the discussion at the end of Section~\ref{sec:row_scaling}, that the elements of $\mb M$ behave rather like geometric random variables. 

\begin{lem} \label{lem:geometric_equiv}
There exist random functions $\{g_{l}(k)\}_{l=1, \ldots, L, \, k=1, \ldots, p}$ defined on the same probability space as the permutations $\mbb \pi$ with the following properties:
\begin{enumerate}[(i)]
\item The random variables $\{g_1(k)\}_{k=1, \ldots,p}, \ldots, \{g_{L}(k)\}_{k=1, \ldots,p}$ are i.i.d. and are independent of $\mbb\Psi$. 
\item The rank of $g_{l}(k)$ among $g_l(1), \ldots, g_l(p)$ taken in increasing order is $\pi_l(k)$.
\item Marginally $g_l(k) \sim \text{Geo}(p^{-1})$.
\item $G_{il}:=\min_{k \in \mb z_i}g_l(k) = g_l(H_{il}) \sim \text{Geo}(\delta_i)$.
\item $\mb G$ and $\mb H$ are independent.
\end{enumerate}
\end{lem}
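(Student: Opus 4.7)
My plan is a coupling argument. On an enlarged probability space, independently of $\mbb\pi$ and $\mbb\Psi$, I will for each $l=1,\ldots,L$ draw the order statistics $Y_{l,(1)}\le\cdots\le Y_{l,(p)}$ of $p$ i.i.d.\ $\text{Geo}(1/p)$ variables (with these sequences independent across $l$), and then set
\[
g_l(k) \;:=\; Y_{l,(\pi_l(k))}, \qquad k=1,\ldots,p.
\]
This prescribes the ranks of $g_l(\cdot)$ to be precisely those given by $\pi_l$ (any ties among the $Y_{l,(\cdot)}$ being broken by $\pi_l^{-1}$), while the numerical values come from an independent source.

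Properties (i)--(iii) then follow almost by inspection. Property (ii) is immediate from the definition. For (i) and (iii), fix $l$: because $\pi_l$ is uniform on $S_p$ and independent of $Y_l$, the unsorted vector $(g_l(1),\ldots,g_l(p))$ has the same joint law as an i.i.d.\ $\text{Geo}(1/p)$ sample; independence across $l$ and independence from $\mbb\Psi$ are inherited from the building blocks. For (iv), the identity $M_{il}=\pi_l(H_{il})$ yields
\[
G_{il} \;=\; \min_{k\in\mb z_i} Y_{l,(\pi_l(k))} \;=\; Y_{l,(M_{il})} \;=\; g_l(H_{il}),
\]
and the distributional claim reduces to the standard computation of the law of the minimum of $q_i$ i.i.d.\ $\text{Geo}(1/p)$ variables under the paper's Geo parameterisation.

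The hard part will be (v). The structural observation is that $\mb H$ is a deterministic function of $\mbb\pi$ alone, whereas $G_{il}=Y_{l,(M_{il})}$ factors through the vector $\mb M$ together with the independent collection $(Y_l)_l$. Since $(Y_l)_l$ is independent of $\mbb\pi$, and hence of $\mb H$, establishing $\mb G\perp\mb H$ reduces to the purely combinatorial statement that, for each fixed $l$, the joint distribution of $(\mb M_{\cdot l},\mb H_{\cdot l})$ factorises as the product of its marginals. At the level of a single pair $(i,l)$ this is the familiar symmetry observation that, under a uniform random permutation, the minimum over any fixed subset is independent of which index achieves it. The delicate part is the joint statement across different rows $i$ whose supports $\mb z_i$ overlap, and this is the step where I expect to spend most of the effort: my plan is to condition on $\mb M_{\cdot l}$ and verify by direct combinatorial computation that the conditional law of $\mb H_{\cdot l}$ is uniform on the appropriate product of the $\mb z_i$'s, rather than invoking a general principle.
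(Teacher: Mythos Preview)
Your coupling idea is natural, but the construction does not satisfy property (iv), and the reason is structural rather than a missing detail. By your own argument for (iii), under your definition $g_l(k)=Y_{l,(\pi_l(k))}$ the full vector $(g_l(1),\ldots,g_l(p))$ has exactly the law of an i.i.d.\ $\text{Geo}(1/p)$ sample. Consequently $G_{il}=\min_{k\in\mb z_i} g_l(k)$ is the minimum of $q_i$ \emph{independent} $\text{Geo}(1/p)$ variables, so
\[
\pr(G_{il}\ge j)=\bigl(1-\tfrac{1}{p}\bigr)^{q_i(j-1)},
\]
which is $\text{Geo}\bigl(1-(1-1/p)^{q_i}\bigr)$ and not $\text{Geo}(q_i/p)$ unless $q_i\in\{0,1\}$. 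In other words, properties (iii) and (iv) together force the $g_l(k)$ to be negatively dependent across $k$; any construction that makes them i.i.d.\ cannot work. This is not a slip in computation but a contradiction built into your scheme: the very feature that makes (iii) immediate is what breaks (iv).

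The paper avoids this by building the $g_l(k)$ from a single permutation of $\{1,\ldots,mp\}$, taking the minimum over $m$ disjoint positions for each $k$, and letting $m\to\infty$. The point is that $\min_{k\in\mb z_i} g_l^{(m)}(k)$ is then the minimum over $mq_i$ positions drawn \emph{without replacement} from $\{1,\ldots,mp\}$, so its limiting law is computed by the same calculation as for a single $k$ with $m$ replaced by $mq_i$, yielding exactly $\text{Geo}(\delta_i)$. The without-replacement structure is precisely the source of the negative dependence that your i.i.d.\ source lacks.

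On (v): your instinct that this is the delicate part is right, but your plan will not succeed as stated. The joint independence of $(\mb M_{\cdot l})$ and $(\mb H_{\cdot l})$ across rows $i$ with overlapping supports $\mb z_i$ is in general false; a three-variable example with $\mb z_1=\{1,2\}$, $\mb z_2=\{2,3\}$ already breaks it. Fortunately the only use of (v) in the paper (in the proof of Proposition~\ref{prop:scaling_adapt}) is the marginal independence of $G_{il}$ and $H_{il}$ for fixed $i$, which does follow from the standard argmin/min symmetry you cite, both in the paper's construction and in yours.
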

\begin{proof}
First consider generating permutations $\mbb\pi$ in the following way. Let $m \in \mathbb{N}$ and let $\sigma_1^{(m)},\ldots, \sigma_L^{(m)}$ be $L$ i.i.d.\ random permutations of $\{1, \ldots, mp\}$. For $k=1,\ldots,p$, let
\[
g_l^{(m)}(k) = \min_{a=0, \ldots,m-1} \sigma_l^{(m)}(k + ap).
\]
Note that the $g_l^{(m)}(k)$ are all distinct and any ordering of them is equally likely so they define a random permutation of $\{1,\ldots,p\}$. Furthermore, for $j=1, \ldots, mp-m+1$,
\begin{align*}
\pr(g_l^{(m)}(k) = j) =  \binom{mp-j}{m-1} \bigg/ \binom{mp}{m} = \frac{1}{p}\bigg(1-\frac{1-m^{-1}}{p-m^{-1}}\bigg)\cdots\bigg(1-\frac{1-m^{-1}}{p-(j-1)m^{-1}}\bigg).
\end{align*}
Thus
\[
\pr(g_l^{(m)}(k) = j) \to \frac{1}{p}\bigg(1-\frac{1}{p}\bigg)^{j-1}
\]
as $m \to \infty$ for $j=1, 2, \ldots$. Similarly $G_{il}^{(m)}:=\min_{k \in \mb z_i}g_l^{(m)}(k)$ has $\pr(G^{(m)}_{il}=j)\to \delta_i(1-\delta_i)^{j-1}$ as $m \to \infty$. Note that $\mb G^{(m)}$ and $\mb H$ are independent. Thus
\[
\{g^{(m)}_l(k)\}_{l=1, \ldots,L, k=1,\ldots,p} \indist \{g_l(k)\}_{l=1, \ldots,L, k=1,\ldots,p} 
\]
as $m \to \infty$ with the random variables $g_l(k)$ having the properties given in the statement of the lemma.
\end{proof}

In the proofs which follow, we will consider the permutations as having been generated as described by Lemma~\ref{lem:geometric_equiv}. We will let $\pi = \pi_1$, $M_i=M_{i1}$, $g=g_1$, $G_1=G_{i1}$, $H_i=H_{i1}$ and $\mbb\psi=\mbb\Psi_1$. Let $C=2^b$, $\nu =2^{-b}$.

The next lemma introduces the general form of $\mb b^*$ that we will use for the main effects results. It also establishes results on the mean and variance of the approximation and gives a bound on $\E(\|\mb b^*\|_2^2)$; these will form the basis of the theorems to follow.

\begin{lem}\label{lem:main_approx}
For a given sequence of weights $\{w_j\}_{j=1}^\infty$, let $\tilde{\mb b}^* \in \R^{LC}$ be given by
\[
\tilde{b}^*_{lc}=\frac{1}{L}\sum_{k=1}^p \beta^*_k \frac{\ind_{\{\Psi_{lk}=c\}}-\nu}{1-\nu} w_{g_l(k)}
\]
and let $\mb b^* = \E(\tilde{\mb b}^*| \mbb\pi)$. We have the following.
\begin{enumerate}[(i)]
\item \[
\E_{\mbb\pi, \mbb\Psi} (\mb s_i^T \mb b^*)=\frac{1}{p} \mb x_i^T \mbb\beta^* \sum_{\ell=1}^\infty(1-\delta_i)^{\ell-1}w_{\ell} .
\]
\item \begin{align}
\E_{\mbb\pi, \mbb\Psi}(\|\mb b^*\|_2^2) \leq \frac{1}{pL(1-\nu)} \|\mbb\beta^*\|_2^2 \sum_{\ell=1}^\infty w_\ell^2. \label{eq:norm_gen}
\end{align}
\item \begin{align}
\Var_{\mbb\pi, \mbb\Psi}(\mb s_i^T \mb b^*) \leq \frac{1}{pL(1-\nu)}\bigg(\nu\|\mbb\beta^*\|_2^2 + (1-2\nu)\sum_{k=1}^pX_{ik}^2{\beta^*_k}^2 \bigg)\sum_{\ell=1}^\infty w_\ell^2. \label{eq:var_gen}
\end{align}
\end{enumerate}
\end{lem}
\begin{proof}
First note that
\begin{align}
\E\bigg( \frac{\ind_{\{\psi_{k}=\psi_{j}\}} -\nu}{1-\nu}\bigg|
 \psi_{j}\bigg) &= \begin{cases} 1 & \mbox{if } k=j \\
     0 & \mbox{otherwise} \end{cases} \label{eq:ind1}\\
     \E\bigg( \frac{\ind_{\{\psi_{k}=\psi_{j}\}} -\nu}{1-\nu} \frac{\ind_{\{\psi_{\ell}=\psi_{j}\}} -\nu}{1-\nu}\bigg|
 \psi_{j}\bigg) &= \begin{cases} 1 & \mbox{if } k=\ell=j \\
     0 & \mbox{if } k \neq \ell \\
     \frac{\nu}{1-\nu} & \mbox{otherwise.}\end{cases} \label{eq:ind2}
\end{align}
For (i), we have
\begin{align*}
\E_{\mbb\pi, \mbb\Psi} (\mb s_i^T \mb b^*) &= \E_{g, \mbb\psi} \bigg(\sum_{c=1}^C\sum_{j = 1} ^p X_{ij}\ind_{\{H_i = j\}} \ind_{\{\psi_j=c\}} \sum_{k=1}^p \beta^*_k \frac{\ind_{\{\psi_k=c\}}-\nu}{1-\nu} w_{g(k)}\bigg) \\
&=  \E_{g} \bigg(\sum_{k = 1} ^p X_{ik}\ind_{\{H_i = k\}} \beta^*_k w_{g(k)}\bigg) \\
&=  \frac{1}{q_i} \sum_{k = 1} ^p X_{ik} \beta^*_k \E(w_{G_i}),
\end{align*}
where to arrive at the second line we used \eqref{eq:ind1}.

Turning to (ii), note that each component of $\mb b^*$ has mean zero and so
\[
\E({b_{lc}^*}^2)=\Var(b_{lc}^*)=\Var\{\E(\tilde{b}_{lc}^*| \mbb\pi)\} \leq \Var(\tilde{b}_{lc}^*).
\]
Now we have
\begin{align*}
\E_{g_1,\ldots,g_L, \mbb\Psi}\|\tilde{\mb b}^*\|_2^2 = \frac{1}{L}\sum_{c=1}^C \sum_{k, \ell}\beta^*_k\beta^*_\ell \E\bigg(\frac{\ind_{\{\psi_k=c\}}-\nu}{1-\nu}\frac{\ind_{\{\psi_\ell=c\}}-\nu}{1-\nu}\bigg)\E(w_{g(k)}w_{g(\ell)})
\end{align*}
Using \eqref{eq:ind2}, we get
\begin{align*}
\E_{g_1,\ldots,g_L, \mbb\Psi}\|\tilde{\mb b}^*\|_2^2 &=\frac{1}{L(1-\nu)}\sum_{k}{\beta^*_k}^2 \E(w_{g(k)}^2) \leq \frac{1}{pL(1-\nu)} \|\mbb\beta^*\|_2^2 \sum_{\ell=1}^\infty w_\ell^2.
\end{align*}

For (iii) we argue as follows.
\begin{align*}
\Var(\mb s_i^T \mb b^*)  &\leq \Var(\mb s_i^T \tilde{\mb b}^*)\\
&\leq \frac{1}{L}\E_{g,\mbb \psi} \bigg( X_{i H_i} ^2 \sum_{k, \ell} \beta^* _k \beta^* _\ell  \frac{\ind_{\{\psi_{k}=\psi_{H_i}\}} -\nu}{1-\nu} \frac{\ind_{\{\psi_{\ell}=\psi_{H_i}\}} -\nu}{1-\nu} w_{g(k)} w_{g(\ell)} \bigg)
\end{align*}
Using \eqref{eq:ind2} and the fact that $\mb X \in [-1,1]^{n \times p}$, we have
\begin{align}
 \Var(\mb s_i^T \mb b^*) &\leq \frac{1}{L}\E\bigg\{X_{iH_i}^2\bigg(\frac{\nu}{1-\nu}\sum_{k=1}^p(\beta^*_k)^2w_{g(k)}^2 + \frac{1-2\nu}{1-\nu}(\beta_{H_i}^*)^2w_{G_i}^2\bigg)\bigg\} \label{eq:equal_q_var} \\
 &\leq \frac{1}{L(1-\nu)}\bigg\{\nu\sum_{k=1}^p{\beta^*_k}^2\E(w^2_{g(k)}) + \frac{1-2\nu}{q_i}\E(w_{G_i}^2)\sum_{k=1}^pX_{ik}^2{\beta^*_k}^2 \bigg\}. \notag
\end{align}
The result then follows as
\[
 \sum_{\ell=1}^\infty w_{\ell}^2 \geq \E(w^2_{g(k)}) = \frac{1}{p}\sum_{\ell=1}^\infty w_{\ell}^2 \bigg(1-\frac{1}{p}\bigg)^{\ell-1} \geq \frac{\delta_i}{q_i}\sum_{\ell=1}^\infty w_{\ell}^2(1-\delta_i)^{\ell-1} = \frac{\E(w_{G_i}^2)}{q_i}. \qedhere
\]
\end{proof}

\subsection{Proof of Theorem~\ref{thm:approx_error_main_binary}}
We use a $\mb b^*$  and $\tilde{\mb b}^*$ as in Lemma~\ref{lem:main_approx} but here we choose the weights $w_\ell$ so as to minimise $\sum_{\ell=1}^\infty w_{\ell}^2$ (a term which features in our upper bounds on the variance and $\E(\|\mb b^*\|_2^2)$) subject to the unbiasedness constraint (i). The unbiasedness constraint amounts to
\[
\sum_{\ell=1}^\infty (1-\delta)^{\ell-1}w_{\ell}=p.
\]
Performing the minimisation with this constraint yields
\[
w_\ell = p \frac{(1-\delta)^{\ell-1}}{\sum_{\ell=1}^\infty (1-\delta)^{2\ell-2}}.
\]
With this choice we have
\begin{align*}
\sum_{\ell=1}^\infty w_\ell^2 = p^2 \bigg(\sum_{\ell=1}^\infty (1-\delta)^{2\ell-2}\bigg)^{-1} = p^2\{1-(1-\delta)^2\} = (2-\delta)qp.
\end{align*}
Substituting into \eqref{eq:norm_gen} and \eqref{eq:var_gen} then yields the result. \qed

\subsection{Proof of Theorem~\ref{thm:scaling_adapt}}
We use a $\mb b^*$  and $\tilde{\mb b}^*$ as in Lemma~\ref{lem:main_approx} but here we take
\[
 w_{\ell+1} = p(-1)^{\ell}\frac{\kappa^{(\ell)}(1)}{\ell!}\{\ind_{\{\ell\leq \floor{m}\}}+ (m-\floor{m})\ind_{\{\ell=\ceil{m}\}}\}
\]
where $m >0$ is a parameter to be chosen. Thus the weights correspond to coefficients from a truncated Taylor series expansion of $\kappa$ about 1.  We have
\begin{equation*}
\E_{\mbb\pi, \mbb\Psi}[\{ (\delta_{\min}/\delta_i)^a\mb x_i^T\mbb\beta^* - \mb s_i^T \mb b^*\}^2] =\{ (\delta_{\min}/\delta_i)^a\mb x_i^T\mbb\beta^* - \E_{\mbb\pi, \mbb\Psi}(\mb s_i^T \mb b^*)\}^2 +  \Var_{\mbb\pi,\mbb\Psi}(\mb s_i^T \mb b^*).
\end{equation*}
We first bound the variance term by bounding the squared sum of the sequence of weights. 
To this end, we note that by Lemma~\ref{lem:taylor_coef_bd}
\begin{align*}
 \frac{\delta_{\min}^{-2a}}{p^2}\sum_{\ell=1}^\infty w_\ell^2 \leq 1 + a^2 + a^2 e^{2a} \bigg(\sum_{\ell=2}^{\floor{m}}\frac{1}{\ell^{2(1-a)}}   + \frac{m-\floor{m}}{\ceil{m}^{2(1-a)}}\bigg).
\end{align*}
Now
\begin{align*}
\sum_{\ell=2}^{\floor{m}} \frac{1}{\ell^{2(1-a)}}   + \frac{m-\floor{m}}{\ceil{m}^{2(1-a)}} &\leq \int_{1}^m \frac{1}{\ell^{2(a-1)}}d\ell \\
&= \begin{cases}
\frac{m^{2a-1}-1}{2a-1} \qquad &\text{if } a \neq 1/2 \\
\log(m) \qquad &\text{if } a = 1/2.
\end{cases}
\end{align*}
Let
\[
\tau_a(m) = \begin{cases} e\log(me^{5/e})/4 \qquad& \text{if } a=1/2, \\
a^2e^{2a}m^{2a-1}/(2a-1)  \qquad & \text{if }1/2< a\leq 1.
\end{cases}
\]
Then
\begin{equation} \label{eq:w_sq_a_bd}
\sum_{\ell=1}^\infty w_{\ell}^2 \leq p^2 \delta_{\min}^{2a}\tau_a(m).
\end{equation}
The variance is then at most
\begin{align*}
\delta_{\min}^{2a}\tau_a(m) \frac{p}{L(1-\nu)}\bigg(\nu\|\mbb\beta^*\|_2^2 +(1-2\nu)\sum_{k=1}^p X_{ik}^2{\beta^*_k}^2 \bigg).
\end{align*}
Turning now to the bias term, note first that by (i) of Lemma~\ref{lem:main_approx}, this is equal to
\begin{align} \label{eq:bias}
(\mb x_i^T\mbb\beta^*)^2\bigg\{(\delta_{\min}/\delta_i)^a-\frac{1}{p}\sum_{\ell=1}^\infty(1-\delta_i)^{\ell-1}w_\ell\bigg\}^2.
\end{align}
We see this is bounded above by
\begin{align*}
 \delta_{\min}^{2a}(\mb x_i^T\mbb\beta^*)^2 \bigg\{ae^a \bigg(\sum_{\ell=\ceil{m}}^\infty (1-\delta_i)^\ell \frac{1}{\ell^{1-a}}\bigg)\bigg\}^2.
\end{align*}
Now
\[
 \sum_{\ell=\ceil{m}}^\infty (1-\delta_i)^\ell \frac{1}{\ell^{1-a}} \leq \frac{e^{-\delta_i m}}{m^{1-a}\delta_i}.
\]
By the Cauchy--Schwarz inequality (assuming $X_{ij} \in [-1,1]$)
\begin{align*}
\frac{(\mb x_i^T\mbb\beta^*)^2}{\delta_i}  = \frac{1}{\delta_i}\bigg(\sum_{k \in \mb z_i} X_{ik}\beta^*_k\bigg)^2 \leq p \sum_{k=1}^pX_{ik}^2{\beta^*_k}^2 \leq p\|\mbb\beta^*\|_2^2.
\end{align*}
Thus the squared bias is at most
\[
\frac{p}{1-\nu} \frac{a^2e^{2a}}{m^{1-2a}}\max_{i=1,\ldots,n}\bigg(\frac{e^{-2\delta_i m}}{m\delta_i}\bigg)\bigg(\nu\|\mbb\beta^*\|_2^2 +(1-2\nu)\sum_{k=1}^p X_{ik}^2{\beta^*_k}^2 \bigg).
\]
Therefore the MSE (now averaging over the observations) is bounded by the minimum over $m>0$ of
\begin{gather*}
  \frac{p}{L (1-2^{-b})}\delta_{\min}^{2a}\bigg\{ \tau_{a}(m) + \frac{a^2e^{2a}}{m^{1-2a}} \max_{i=1,\ldots,n}\bigg(\frac{e^{-2\delta_i m}}{m\delta_i}\bigg)\bigg\} \|\mbb\beta^*\|_b^2.
\end{gather*}
For $a=1/2$, we set $m=\log(L)/\{2\delta_{\min}\}$. This yields 
 \begin{align*}
 \min_{m>0} \bigg\{\tau_{1/2}(m) + \frac{Le}{4}\max_{i=1,\ldots,n}\bigg(\frac{e^{-2\delta_i m}}{m\delta_i}\bigg)\bigg\} &\leq 
 \frac{e}{4}\bigg\{\log\bigg(\frac{\log(L)e^{5/e}}{2\delta_{\min}}\bigg) + \frac{2}{\log(L)}\bigg\} \\
 &\leq \log\{4\log(L)/\delta_{\min}\}
 \end{align*}
 provided $L \geq 10$ and $\delta_{\min} \leq 1/2$.
 Finally the bound for $a>1/2$ comes from setting
 \[
 m= \frac{1}{2}\log\{2(2a-1)L\}/\delta_{\min}
 \]
which gives
\begin{align*}
\min_{m>0}\bigg\{\tau_a(m) + \frac{La^2e^{2a}}{m^{1-2a}}\max_{i=1,\ldots,n}\bigg(\frac{e^{-2\delta_i m}}{m\delta_i}\bigg) \bigg\} &\leq \frac{ \delta_{\min}^{1-2a}a^2e^{2a}}{2^{2a-1}(2a-1)}  [\log\{2(2a-1)L\}]^{2a-2}\log\{2(2a-1)eL\} \\
& \leq \frac{4\delta_{\min}^{1-2a}}{1-2a} [\log\{2(2a-1)L\}]^{2a-1}
\end{align*}
for $L \geq 2/(1-2a)$.
Using the bounds on $\tau_a$ with these choices of $m$ and \eqref{eq:w_sq_a_bd}, we obtain the bounds on $\E(\|\mb b^*\|_2^2)$ by substituting into \eqref{eq:norm_gen}.
\qed

\subsection{Unequal row sparsity and constant row-scaling} \label{sec:unequal}
Here we prove results indicated after the presentation of Theorem~\ref{thm:approx_error_main_binary} in Section~\ref{sec:un-scaled}.
 When the scaling function is simply the constant 1, the spread of the $\delta_i$ becomes more critical in determining how well the signal can be approximated. Define
\begin{align*}
\bar{\delta}&=\frac{1}{n}\sum_{i=1}^n \delta_i, \\
\mathcal{V}(\mbb\delta)&= \frac{1}{\norms{\mb X\mbb\beta^*}_2^2}\sum_{i=1}^n (\mb x_i^T \mbb\beta^*)^2(\delta_i - \bar{\delta})^2.
\end{align*}
\begin{thm} \label{thm:unequal_sparsity}
Suppose
\begin{equation} \label{eq:L_bd}
2^bL (1-2^{-b}) \leq \dfrac{p(2\bar{\delta})^3\|\mbb\beta^*\|_b^2}{\norms{\mb X\mbb\beta^*}_2^2\mathcal{V}(\mbb\delta)/n}.
\end{equation}
Then there exists $\mb b^* \in \R^L$ such that the approximation error satisfies 
\begin{equation} \label{eq:unequal_q_approx}
\frac{1}{n}\E_{\mbb\pi, \mbb\Psi}\{\norms{\mb X\mbb\beta^* - \mb S \mb b^*}_2^2\} \leq \frac{6p\bar{\delta}}{2^b L(1-2^{-b})}\|\mbb\beta^*\|_b^2,
\end{equation}
and
\begin{equation} \label{eq:unequal_q_b}
\E_{\mbb\pi, \mbb\Psi}(\|\mb b ^*\|_2^2) \leq \frac{2\bar{q}}{L(1-2^{-b})}\|\mbb\beta^*\|_2^2.
\end{equation}
\end{thm}
Provided $2^b L$ is not too large, we recover essentially the same approximation error bound as
Theorem~\ref{thm:approx_error_main_binary} up to a constant factor, but with the row sparsity replaced by the average row sparsity $\bar{\delta}$. In the simple situation where the entries of $\mb X$ are realisations of i.i.d.\ Bernoulli random variables with probability $\delta$, we would have $\bar{\delta} \approx \delta$,
$\norms{\mb X\mbb\beta^*}_2^2/n \approx \delta \norms{\mbb\beta^*}_2^2$ and
$\mathcal{V}(\mbb\delta)\approx \delta/p$. Substituting these values into the requirement on $2^b L$ shows that the condition reduces to $2^b L \leq 8p^2 \delta\{1+(2^b-2)\delta\}$.
Note that typically one would choose $2^bL$ of the order $\bar{\delta}p$. More generally, provided $\mathcal{V}(\mbb\delta)$ and
$\|\mb X\mbb\beta^*\|_2^2 / \|\mbb\beta^*\|_2^2$ are small, we can expect that the bound of Theorem~\ref{thm:approx_error_main_binary} will hold true, up to a constant factor. 

\subsubsection*{Proof of Theorem~\ref{thm:unequal_sparsity}}
We use a $\mb b^*$  and $\tilde{\mb b}^*$ as in Lemma~\ref{lem:main_approx} taking
\[
w_\ell = p(1-\bar{\delta})^{\ell-1}\ind_{\{\ell\leq m\}}\frac{\bar{\delta}(2-\bar{\delta})}{1-(1-\bar{\delta})^{2m}}.
\]
where $m\in \mathbb{N}$ is a parameter to be chosen.  This gives
\begin{align*}
 \frac{1}{p^2}\sum_{\ell=1}^\infty w_{\ell}^2 = \frac{\bar{\delta}(2-\bar{\delta})}{1-(1-\bar{\delta})^{2m}},
\end{align*}
which gives us a bound on the variance term.

Lemma~\ref{lem:main_approx} (i) gives the expression for the bias term.
To bound this, first note that
\begin{align*}
\frac{1}{p}\sum_{\ell=1}^m(1-\bar{\delta})^{\ell-1}w_{\ell}=1.
\end{align*}
Next
\begin{align*}
\bigg[\sum_{\ell=1}^m(1-\bar{\delta})^{\ell-1}\{(1-\bar{\delta})^{\ell-1}-(1-\delta_i)^{\ell-1}\}\bigg]^2 &= (\delta_i-\bar{\delta})^2\bigg[\sum_{\ell=1}^m(1-\bar{\delta})^{\ell-1}\sum_{k=0}^{\ell-2}(1-\bar{\delta})^k(1-\delta_i)^{\ell-2-k}\bigg]^2 \\
&\leq (\delta_i-\bar{\delta})^2\bigg(\sum_{\ell=1}^m(1-\bar{\delta})^{\ell-1}(\ell-1)\bigg)^2 \\
&=\min\bigg\{\frac{m(m-1)}{2}, \, \frac{1}{\bar{\delta}^2}\bigg\}^2(\delta_i-\bar{\delta})^2.
\end{align*}
Also note that as
\[
(1-\bar{\delta})^{2m} \leq 1-2m\bar{\delta} + m(2m-1)\bar{\delta}^2
\]
we have
\begin{align*}
\frac{\bar{\delta}(2-\bar{\delta})}{1-(1-\bar{\delta})^{2m}} &\leq \frac{2}{2m-m(2m-1)\bar{\delta}}\ind_{\{m\leq 1/(2\bar{\delta})\}}  + \frac{2}{1/\bar{\delta} - (1/\bar{\delta}-1)/2}\ind_{\{m > 1/(2\bar{\delta})\}}\\
&\leq \max\bigg(\frac{2}{m+1/2},\, \frac{4\bar{\delta}}{1+\bar{\delta}}\bigg)
\end{align*}
and for $m\leq 1/(2\bar{\delta}) + 1/2$,
\[
\frac{m(m-1)}{2} \max\bigg(\frac{2}{m+1/2},\, \frac{4\bar{\delta}}{1+\bar{\delta}}\bigg) \leq (m-1/2)\ind_{\{1 < m \leq 1/(2\bar{\delta}) + 1/2\}}.
\]
Thus the overall approximation error is bounded above by the minimum over $m=1, 2, \ldots, \floor{1/(2\bar{\delta}) + 1/2}$ of
\[
\ind_{\{m>1\}}(m-1/2)^2\frac{1}{n}\norms{\mb X\mbb\beta^*}_2^2\mathcal{V}(\mbb\delta) + \max\bigg(\frac{2}{m+1/2},\, 4\bar{\delta}\bigg)\frac{p}{2^b L(1-2^{-b})}\|\mbb\beta^*\|_b^2,
\]
which in turn is bounded by the minimum over $m \in [0,1/(2\bar{\delta})]$ of
\begin{equation} \label{eq:m_bd}
m^2\frac{1}{n}\norms{\mb X\mbb\beta^*}_2^2\mathcal{V}(\mbb\delta) + \frac{2}{m}\frac{p}{2^b L(1-2^{-b})}\|\mbb\beta^*\|_b^2.
\end{equation}
 Optimising over $m>0$ in the above then gives
 \[
 m=\min\bigg\{\bigg(\frac{p\|\mbb\beta^*\|_b^2}{2^b L (1-2^{-b})\|\mb X\mbb\beta^* \|_2^2\mathcal{V}(\mbb\delta)/n}\bigg)^{1/3}, \, \frac{1}{2\bar{\delta}}\bigg\}.
 \]
The condition on $L$ \eqref{eq:L_bd} ensures that the minimum is achieved at $1/(2\bar{\delta})$. Substituting this value of $m$ into \eqref{eq:m_bd} then gives \eqref{eq:unequal_q_approx}. For \eqref{eq:unequal_q_b} we note that
\[
\sum_{\ell=1}^\infty w_\ell^2 \leq 2p^2 \bar{\delta},
\]
and use Lemma~\ref{lem:main_approx} (ii).
\qed

\subsection{Proof of Theorem~\ref{thm:approx_error_inter}} \label{proof:thm:approx_error_inter}
We let $\mb b^* = \mb b^{*,(1)} + \mb b^{*,(2)}$ where $\mb b^{*,(1)}$ is chosen in line with Theorem~\ref{thm:approx_error_main_binary}. Explicitly, let $\mb b^{*,(1)} = \E(\tilde{\mb b}^*| \mbb \pi)$ where
\[
\tilde{b}^*_{lc}=\frac{p}{L}\sum_{k=1}^p \theta^{*,(1)}_k \frac{\ind_{\{\Psi_{lk}=c\}}-\nu}{1-\nu}
\frac{(1-\delta)^{g_l(k)-1}}{\sum_{\ell=1}^\infty (1-\delta)^{2\ell-2}}.
\]
We construct $\mb b^{*,(2)}$ to approximate the interactions as follows. Let
\begin{gather*}
 b_{lc}^{*,(2)} = \frac{pq}{L} \sum_{k=1} ^p \frac{\ind_{\{\Psi_{lk}=c\}} -\nu}{1-\nu} \sum_{k_1 = 1} ^p \Theta^{*,{(2)}} _{k k_1} \ind_{\{\pi_l (k_1) < \pi_l (k)\}} w_{\pi_l(k)},
\end{gather*}
where $\mb w \in \R^p$ is a vector of weights to be chosen such that
\begin{equation} \label{eq:unbiased_inter}
\E_{\mbb{\pi}, \mbb{\Psi}}(\mb{s}_i^T\mb{b}^{*,(2)}) = \sum_{k, k_1} X_{ik}\ind_{\{X_{ik_1} = 0\}}\Theta^{*,{(2)}} _{k k_1}.
\end{equation}
We compute
\begin{align*}
\E_{\mbb{\pi}, \mbb{\Psi}}(\mb{s}_i^T\mb{b}^{*,(2)}) &= \frac{pq}{L} \sum_{l=1} ^L \sum_{c=1}^C \E_{\pi_l, \mbb{\Psi}_l} \left( {S}_{ilc} \sum_{k=1} ^p \frac{\ind_{\{\Psi_{kl}=c\}} -\nu}{1-\nu} \sum_{k_1 = 1} ^p \Theta^{*,{(2)}}_{k k_1} \ind_{\{\pi_l (k_1) < \pi_l (k)\}} w_{\pi_l (k)} \right) \\
  & = pq \E_{\pi, \mbb{\psi}} \left( \sum_{c=1}^C \sum_{j = 1}^p X_{ij} \ind_{\{H_i = j\}} \ind_{\{\psi_j=c\}} \sum_{k=1} ^p \frac{\ind_{\{\psi_{k}=c\}} -\nu}{1-\nu} \sum_{k_1 = 1} ^p \Theta^{*,{(2)}} _{k k_1} \ind_{\{\pi (k_1) < \pi (k)\}} w_{\pi (k)}  \right) \\
  &= pq \E_{\pi} \left( \sum_{k=1} ^p X_{ik} \ind_{\{H_i = k\}} \sum_{k_1 = 1} ^p \Theta^{*,{(2)}} _{k k_1} \ind_{\{\pi (k_1) < \pi (k)\}} \sum_{\ell = 2} ^p w_{\ell} \ind_{\{\pi (k) = \ell\}} \right).
\end{align*}
where in the final line we have appealed to \eqref{eq:ind1}.
Now observe that for $k \in \mb{z}_i$,
\begin{align*}
 \ind_{\{H_i = k\}} \ind_{\{\pi(k_1)<\pi(k)\}} \ind_{\{\pi (k)=\ell\}} = \ind_{\{X_{ik_1} = 0\}} \ind_{\{H_i = k\}} \ind_{\{M_i = \ell, \, \pi(k_1) < \ell \}},
\end{align*}
and $\ind_{\{H_i = k\}}$ and $\ind_{\{M_i = \ell, \, \pi(k_1) < \ell \}}$ are independent. Thus we have
\begin{align*}
 \E_{\mbb{\pi}, \mbb{\Psi}}((\mb{S}\mb{b}^{*,(2)})_i) &= \sum_{k, k_1} X_{ik}\ind_{\{X_{ik_1} = 0\}}\Theta^{*,{(2)}} _{k k_1}  \sum_{\ell = 1} ^p p\pr_\pi(M_i = \ell, \pi(k_1)<\ell) w_{\ell} \\
 &= \sum_{k, k_1} X_{ik}\ind_{\{X_{ik_1} = 0\}}\Theta^{*,{(2)}} _{k k_1}  \sum_{\ell = 2} ^p (\ell-1) \pr_\pi(M_i = \ell |\, \pi(k_1)<\ell) w_{\ell} \\
 &= \sum_{k, k_1} X_{ik}\ind_{\{X_{ik_1} = 0\}}\Theta^{*,{(2)}} _{k k_1}  \sum_{\ell = 2} ^p (\ell-1) \frac{\binom{p-\ell}{q-1}}{\binom{p-1}{q}} w_{\ell}.
\end{align*}
Thus if we choose $\mb{w}$ such that
\begin{equation}
 \sum_{\ell = 2} ^p (\ell-1) \frac{\binom{p-\ell}{q-1}}{\binom{p-1}{q}} w_{\ell} = 1, \label{eq:constr_inter}
\end{equation}
property \eqref{eq:unbiased_inter} will be satisfied.

Next we compute
\begin{align}
\E(\|\mb b^{*,(2)}\|_2^2) &\leq \frac{p^2 q^2}{L(1-\nu)}\sum_{k=1}^p \E \bigg\{\bigg(\sum_{k_1 = 1} ^p \Theta^{*,{(2)}} _{k k_1} \ind_{\{\pi(k_1) < \pi(k)\}}\bigg)^2 w^2_{\pi(k)}\bigg\} \notag \\
&= \frac{p^2 q^2}{L(1-\nu)} \sum_{k=1}^p \sum_{\ell=1}^p w_\ell^2 \bigg( \sum_{k_1} (\Theta^{*,{(2)}} _{k k_1})^2 \pr(\pi(k)=\ell, \pi(k_1) < \ell ) \notag\\
& \qquad \qquad + \sum_{k_1 \neq k_2} \Theta^{*,{(2)}} _{k k_1}\Theta^{*,{(2)}} _{k k_2} \pr(\pi(k)=\ell, \pi(k_1)<\ell, \pi(k_2)<\ell) \bigg) \notag \\
&= \frac{p q^2}{L(1-\nu)} \sum_{k=1}^p \sum_{\ell=2}^p w_\ell^2 \bigg(\frac{\ell-1}{p-1} \sum_{k_1}(\Theta^{*,{(2)}} _{k k_1})^2 + \frac{(\ell-1)(\ell-2)}{(p-1)(p-2)}\sum_{k_1 \neq k_2} \Theta^{*,{(2)}} _{k k_1}\Theta^{*,{(2)}} _{k k_2} \bigg) \notag \\
&\leq \frac{pq^2}{(p-1) L(1-\nu)}  \sum_{k, k_1, k_2} \abs{\Theta^{*,{(2)}} _{k k_1} \Theta^{*,{(2)}} _{k k_2} } \sum_{\ell=2} ^p (\ell-1) w_{\ell} ^2. \label{eq:inter_bd_0}
\end{align}

Choosing
\begin{equation} \label{eq:def_W2}
 w_{\ell} = \frac{\binom{p-\ell}{q-1} \Big/ \binom{p-1}{q}}{ \sum_{\ell' = 2} ^p (\ell'-1) \left\{ \binom{p-\ell'}{q-1} \Big/ \binom{p-1}{q}\right\}^2}
\end{equation}
minimises \eqref{eq:inter_bd_0} subject to \eqref{eq:constr_inter} to give
\[
 \E_{\mbb{\pi}, \mbb{\Psi}} (\norms{\mb{b}^{*,(2)}}_2 ^2) \leq \frac{p q^2}{(p-1)L(1-\nu)}  \sum_{k, k_1, k_2} \abs{\Theta^{*,{(2)}} _{k k_1} \Theta^{*,{(2)}} _{k k_2} } \left\{ \sum_{\ell=1} ^{p-1} \ell \left( \frac{\binom{p-1-\ell}{q-1}}{\binom{p-1}{q}}\right)^2 \right\}^{-1}.
\]
Finally, Lemma~\ref{lem:inter_bd_lem} bounds the right-most term from above to yield
\begin{equation} \label{eq:inter_var2}
  \E_{\mbb{\pi}, \mbb{\Psi}} (\norms{\mb{b}^{*,(2)}}_2 ^2) \leq \frac{2\{(2-\delta)q\}^2}{L(1-\nu)}  \sum_{k, k_1, k_2} \abs{\Theta^{*,{(2)}} _{k k_1} \Theta^{*,{(2)}} _{k k_2} }.
\end{equation}

Now we turn to the mean-squared error. Observe that $\mb s_i^T \mb b^*$ is a sum of $L$ independent random variables, each having the same distribution as
\[
\sum_{c=1}^C S_{i1c} b^*_{1c} = \sum_{c=1}^C S_{i1c}\mb (b^{*,(1)}_{1c} + b^{*,(2)}_{1c}).
\]
Thus
\begin{align*}
\Var(\mb s_i^T \mb b^*) &\leq \frac{1}{L} \E \bigg(\sum_{c=1}^C S_{i1c}\mb (b^{*,(1)}_{1c} + b^{*,(2)}_{1c})\bigg)^2 \\
 &\leq \frac{1}{L} \Bigg[ \bigg\{\E \bigg(\sum_{c=1}^C S_{i1c}\mb b^{*,(1)}_{1c}\bigg)^2\bigg\}^{1/2} + \bigg\{\E \bigg(\sum_{c=1}^C S_{i1c}\mb b^{*,(2)}_{1c}\bigg)^2\bigg\}^{1/2} \Bigg]^2,
\end{align*}
where we have used the Cauchy--Schwarz inequality in the final line. Now using the fact that $\|\mb X\|_\infty \leq 1$, and following the argument that leads to \eqref{eq:equal_q_var}, we arrive at
\begin{align}
\E \bigg(\sum_{c=1}^C S_{i1c}\mb b^{*,(2)}_{1c}\bigg)^2 &=p^2q^2
\E\bigg\{\frac{\nu}{1-\nu}\sum_{k=1}^p\bigg( \sum_{k_1 = 1} ^p \Theta^{*,{(2)}} _{k k_1} \ind_{\{\pi (k_1) < \pi (k)\}} w_{\pi(k)} \bigg)^2 \notag \\
&\qquad\qquad + \frac{1-2\nu}{1-\nu}X_{iH_i}^2 \bigg( \sum_{k_1 = 1} ^p \Theta^{*,{(2)}} _{H_i k_1} \ind_{\{\pi (k_1) < M_i\}} w_{M_i} \bigg)^2\bigg\}. \label{eq:inter_bd_1}
\end{align}
We have
\begin{align}
\E \bigg\{  &  X_{iH_i}^2 \bigg( \sum_{k_1 = 1} ^p \Theta^{*,{(2)}} _{H_i k_1} \ind_{\{\pi (k_1) < M_i\}} w_{M_i} \bigg)^2\bigg\} =\frac{1}{q} \sum_{k=1}^p \sum_{\ell=1}^p X_{ik}^2 \E\bigg\{ \bigg(\sum_{k_1} \Theta^{*,(2)}_{k,k_1}  \ind_{\{\pi(k_1)<\ell\}} w_\ell \bigg)^2 \ind_{\{M_i=\ell\}} \bigg\} \notag\\
&= \sum_{k=1}^p X_{ik}^2 \sum_{\ell=2}^p w_\ell^2 \bigg( \sum_{k_1} (\Theta^{*,{(2)}} _{k k_1})^2 \pr(M_i=\ell, \pi(k_1) < \ell ) + \sum_{k_1 \neq k_2} \Theta^{*,{(2)}} _{k k_1}\Theta^{*,{(2)}} _{k k_2} \pr(M_i=\ell, \pi(k_1)<\ell, \pi(k_2)<\ell) \bigg) \notag\\
&= \sum_{k=1}^p X_{ik}^2 \sum_{\ell=2}^p w_\ell^2 \Bigg( \frac{\ell-1}{p-1} \frac{\binom{p-\ell}{q-1}}{\binom{p-1}{q}} \sum_{k_1} (\Theta^{*,{(2)}}_{k k_1})^2 + \frac{(\ell-1)(\ell-2)}{(p-1)(p-2)} \frac{\binom{p-\ell}{q-1}}{\binom{p-2}{q}} \sum_{k_1 \neq k_2} \Theta^{*,{(2)}} _{k k_1}\Theta^{*,{(2)}} _{k k_2} \Bigg). \label{eq:inter_bd_2}
\end{align}
Now
\[
\frac{\binom{p-\ell}{q-1}}{\binom{p-1}{q}} \leq \frac{q}{p-1} \qquad \text{and} \qquad \frac{\ell-2}{p-2} \frac{\binom{p-\ell}{q-1}}{\binom{p-2}{q}} \leq \frac{q}{p-1}.
\]
Thus by Lemma~\ref{lem:inter_bd_lem} the quantity in \eqref{eq:inter_bd_2} is at most
\[
 \frac{2(2-\delta)^2\delta}{p^2}\sum_{k,k_1,k_2}^p \abs{X_{ik}^2\Theta^{*,{(2)}} _{k k_1} \Theta^{*,{(2)}} _{k k_2} }.
\]
Returning to \eqref{eq:inter_bd_1} and using the argument leading to \eqref{eq:inter_bd_0} therefore gives us
\begin{align*}
\E \bigg(\sum_{c=1}^C S_{i1c}\mb b^{*,(2)}_{1c}\bigg)^2 \leq 2(2-\delta)^2q^2\bigg(\frac{\nu}{1-\nu} \sum_{k,k_1,k_2} \abs{\Theta^{*,{(2)}}_{k k_1} \Theta^{*,{(2)}}_{k k_2}} +  \delta \frac{1-2\nu}{1-\nu} \sum_{k,k_1,k_2} \abs{X_{ik}^2\Theta^{*,{(2)}}_{k k_1} \Theta^{*,{(2)}}_{k k_2}} \bigg),
\end{align*}
which then gives part (iii) of the result.
\qed

\section{Implications for the RKHS of the resemblance kernel} \label{sec:RKHS_interpret}
We first observe the following result that is an immediate consequence of \citet{BOUCHARD2013615}.
\begin{prop}
Consider the resemblance kernel with input space $\mathcal{X} = \{0,1\}^p$ and let $\mathcal{H}$ be the corresponding RKHS. Then $\mathcal{H}$ contains every function $f : \mathcal{X} \to \R$.
\end{prop}
\begin{proof}
Let $\mb X \in \R^{|\mathcal{X}| \times p}$ be the matrix with each row a different element of $\mathcal{X}$ and let $\mb K \in \R^{|\mathcal{X}| \times |\mathcal{X}|}$ be the matrix with $K_{\mb x \mb x'} = k(\mb x, \mb x')$ where $k$ is the resemblance kernel. \citet{BOUCHARD2013615} shows that $\mb K$ is positive definite. Given $f : \mathcal{X} \to \R$, let $\mb f \in \R^{|\mathcal{X}|}$ be the vector of function evaluations so $f_x = f(x)$. Let $\mb \alpha  = \mb K^{-1} \mb f$. Then
\[
f(\cdot) = \sum_{x \in \mathcal{X}} \alpha_x k( \cdot, x)
\]
so $f \in \mathcal{H}$.
\end{proof}

The following corollary of Theorem~\ref{thm:approx_error_inter} derives properties of the RKHS associated with the resemblance kernel from our approximation error bounds.
\begin{cor}
Let $\mathcal{H}$ be the RKHS of the resemblance kernel $k$ when the input space $\mathcal{X} \subset \{0,1\}^p$ is constrained such that every element has $q$ non-zeroes. Suppose $p \geq 3$. For $\mbb\theta^{(1)} \in \R^p$, $\mbb\Theta^{(2)} \in \R^{p \times p}$ and $\mbb\Theta = (\mbb\theta^{(1)}, \mbb\Theta^{(2)})$, define $f_{\mbb\Theta} : \mathcal{X} \to \R$ by
\[
f_{\mbb\Theta} (\mb x) = \sum_{k=1}^p x_k \theta^{(1)}_k + \sum_{k=1}^p \sum_{j=1}^p x_k(1-x_j) \Theta_{k, j}^{(2)}.
\]
Suppose $\mbb\Theta$ is such that $f_{\mbb\Theta}$ is centred so $\sum_{\mb x \in \mathcal{X}} f_{\mbb\Theta}(\mb x)=0$
Then $f_{\mbb\Theta} \in \mathcal{H}$ and $\|f_{\mbb\Theta}\|_\mathcal{H}^2 \leq (2-\delta)q\|\mbb\Theta\|^2$. In particular if $\mbb\Theta^{(2)} =\mb 0$ then $\|f_{\mbb\theta^{(1)}}\|_\mathcal{H}^2 \leq (2-\delta)q\|\mbb\theta^{(1)}\|^2_2$.
\end{cor}
\begin{proof}
Let $\mb K \in \R\in \R^{|\mathcal{X}| \times |\mathcal{X}|}$ be the matrix with $K_{x x'} = k(x, x')$. We will make use of the fact that $\mb K$ is positive definite \citep{BOUCHARD2013615}. Suppose $\mb X \in \{0,1\}^{|\mathcal{X}| \times p}$ has as each row a different element of $\mathcal{X}$. For $L \in \mathbb{N}$, let $\mb S_L$ be the matrix formed from $1$-bit min-wise hashing applied to $\mb X$ and let $\mb K_L = 2 \mb S_L \mb S_L^T /L - \mb J$ where $\mb J$ is a $|\mathcal{X}| \times |\mathcal{X}|$ matrix of 1's. Given $\mbb\Theta$, let $\mb b^*_L$ be as in the proof of Theorem~\ref{thm:approx_error_inter} (see Section~\ref{proof:thm:approx_error_inter}) constructed using the permutations and $\mbb\Psi$ matrix corresponding to $\mb S_L$.

Let $k_L:\mathcal{X} \times \mathcal{X} \to \R$ be the random kernel associated with $\mb K_L$, that is $k(x, x') = K_{L, xx'}$; further let $\mathcal{H}_L$ be the associated RKHS. Let $\tilde{\mb b}_L$ be a centred version of $\mb b^*_L$ so $\tilde{\mb b}_L = \mb b^*_L - \bar{\mb b^*}_L$. Observe that $\|\tilde{\mb b}_L\|_2^2 \leq \|\mb b^*_L \|_2^2$.
Let $f_L: \mathcal{X} \to \R$ be given by $f_L(x) = (S_L \tilde{\mb b}_L)_x$. Then $f_L \in \mathcal{H}_L$ and $\|f_L\|_{\mathcal{H}_L}^2 = L\|\tilde{\mb b}_L\|_2^2/2$.

Note that the construction of $\mb b^*_L$ ensures that each component block is i.i.d. Thus as $L \to \infty$, we have that almost surely
\[
L\|\tilde{\mb b}_L\| \leq L\|\mb b^*_L\|_2^2 \to L^2 \E \|(b^*_{L, 1}, b^*_{L, 2})^T\|_2^2 \leq 2(2-\delta)q\|\mbb\Theta\|^2
\]
(note that the expression on the right hand side of the limit does not in fact depend on $L$). Also $\mb K_L \to \mb K$ almost surely by the strong law of large numbers (see Section~\ref{sec:Jaccard}).

Now observe that as $\mb f$ is centred, $\|\mb S_L \tilde{\mb b}_L - \mb f\|_2^2 \leq \|\mb S_L \mb b^*_L - \mb f\|_2^2$. Thus from Theorem~\ref{thm:approx_error_inter} (iii) we have that $\mb S_L \tilde{\mb b}_L \to \mb f$ in probability where $\mb f \in \R^{|\mathcal{X}|}$ has components $f_x=f_{\mbb\Theta}(x)$. Therefore there exists a subsequence $L_j$ along which $\mb S_{L_j} \tilde{\mb b}_{L_j} \to \mb f$ almost surely. Thus, there exists a realisation of the random elements above such that simultaneously $\mb K_{L_j} \to \mb K$, $f_{L_j}(x) \to f(x)$ as $j \to \infty$ and $\lim_{j \to \infty} \|f_{L_j}\|_{\mathcal{H}_{L_j}}^2 \leq (2-\delta)q\|\mbb\Theta\|^2$. In particular we have that $\|f_{L_j}\|_{\mathcal{H}_{L_j}}$ is bounded for all $j$. Applying Lemma~\ref{lem:RKHS} then gives the result.
\end{proof}

\section{Empirical verification of Theorem~\ref{thm:approx_error_main_binary}}
In order to assess whether the scaling in $L$ provided by (iii) of Theorem~\ref{thm:approx_error_main_binary} is in line with what is observed in practice, we looked at several numerical experiments. We generated design matrices $\mb X \in \{0,1\}^{n \times p}$ with different levels of sparsity $q \in \{500, 1000, 5000\}$ and $(n, p)=(10^4, 10^5)$. Different $\mb S$ matrices were constructed for each of the three $\mb X$ matrices with $\log_2 L \in \{5, 6, \ldots, 12\}$. We then generated 100 vectors of coefficients $\mbb\beta^* \in \R^p$ with $\|\mbb\beta^*\|_2 = 1$ for each setting and examined
\begin{equation} \label{eq:empirical}
\text{err}_L := \min_{\mb b \in \R^L} \|\mb X \mbb\beta^* - \mb S \mb b\|_2^2/n.
\end{equation}
\begin{center}
\begin{figure}
\includegraphics[width=\textwidth]{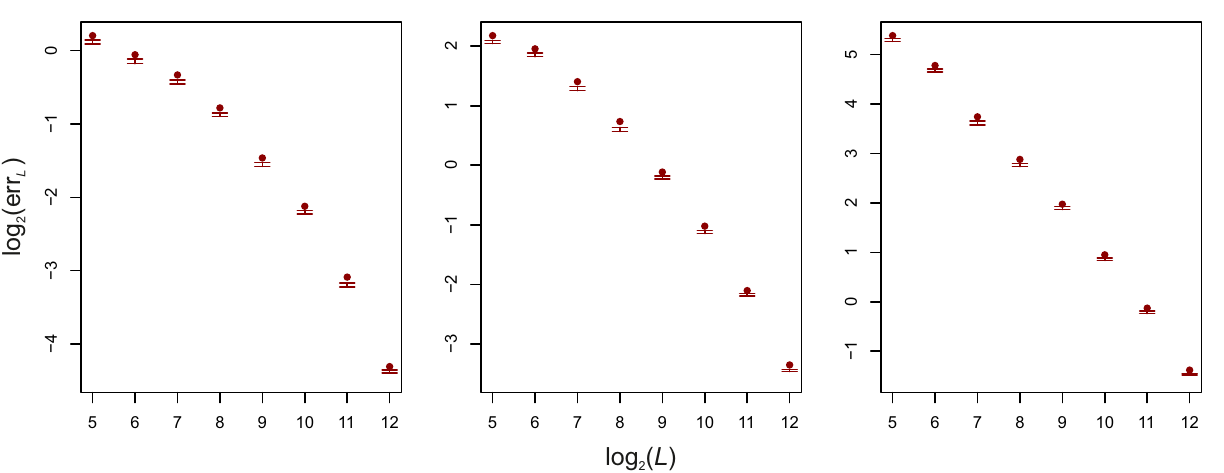}
\caption{\label{fig:plot}Plot of $\log_2(\text{err}_L)$ against $\log_2(L)$ for $q=500, 1000, 5000$ going from left to right. The bars give the first and third quartiles of $\log_2(\text{err}_L)$ over the 100 simulations, and the circles give maximum values.}
\end{figure}
\end{center}
Plots of $\log_2(\text{err}_L)$ against $\log_2(L)$ are given in Figure~\ref{fig:plot}. Given the scaling in $L$ suggested by Theorem~\ref{thm:approx_error_main_binary} (iii), we would expect the points to lie on a straight line with slope~$-1$.. We see this is indeed approximately the case for lager $L$ and $q$.

Note that Theorem~\ref{thm:approx_error_main_binary} does not make the claim that the $\mb b^*$ given is optimal in the sense of \eqref{eq:empirical}. Indeed it also satisfies unbiasedness and has a low $\ell_2$-norm in expectation: properties not necessarily satisfied by the minimiser of \eqref{eq:empirical}. Moreover, the bound must encompass a worst case in terms of $\mb X$ and the direction of $\mbb\beta^*$; tighter bounds may be used at the expense of a more complicated dependence on the precise form of $\mb X$ and $\mbb\beta^*$. However, we see from the empirical study that the scaling in $L$ provided by the result approximately parallels that corresponding to the minimiser of \eqref{eq:empirical}.

The details of the simulation study are as follows. The design $\mb X$ was generated randomly with the first $p/100$ columns containing $q/10$ 1's and the remaining columns containing $9q/10$ 1's. This mimics the setting of increasing variable sparsity described in Section~\ref{sec:OLS_main}. The vector of coefficients $\mbb\beta^*$ had its first $p/100$ entries generated independently with an $\text{Exp}(1)$ distribution and the remaining entries were set to 0; $\mbb\beta^*$ was then scaled to have $\ell_2$-norm 1.

\section{Prediction error results} \label{sec:pred_res}
Here we prove results for the prediction error under linear and logistic regression models. We denote the signal to be estimated by $\mb f^*$ and assume the existence of a $\mb b ^* \in \R^{2^b L}$ with
\begin{align*}
\frac{1}{n} \E(\|\mb f^* - \mb S \mb b^*\|_2^2) &\leq c_1/L \\
\E(\|\mb b^*\|_2^2) &\leq c_2/L. 
\end{align*}
Explicit constructions for such coefficient vectors are provided in the previous section. Using the results here in conjunction with the approximation error results proved in Section~\ref{app:approx} yield Theorems~\ref{thm:main_OLS_pred_error}--\ref{thm:interaction_logistic}: for example, substituting (iii) of Theorem~\ref{thm:approx_error_main_binary} immediately gives Theorem~\ref{thm:main_OLS_pred_error}.

\subsection{Linear regression}
We assume the model
\begin{equation}
\mb Y = \alpha^* \mb 1 + \mb f^* + \mbb\varepsilon,
\end{equation}
where $\Var(\mbb\varepsilon)=\sigma^2 \mb I$ and our goal is to estimate $\mb f^*$.
\begin{thm} \label{thm:OLS}
 Let $(\hat{\alpha}, \hat{\mb b})$ be the least squares estimator (\ref{eq:est_ols}). Then
 \begin{equation*}
  \MSPE ((\hat{\alpha}, \hat{\mb{b}})) \;\leq \; \frac{c_1}{L} + \frac{\sigma^2 \{(2^b-1)L+1\}}{n}.
 \end{equation*}
\end{thm}
\begin{proof}
Let us write
 \begin{equation*}
  \mb{Y} = \alpha^* \mb{1} + \mb f^* + \mbb{\varepsilon} = \alpha^* \mb{1} + \mb{S} \mb{b}^* + \mbb{\Delta} + \mbb{\varepsilon},
 \end{equation*}
so $\mbb{\Delta}$ is the approximation error of $\mb{S} \mb{b}^*$. Then we have
\begin{align*}
\MSPE ((\hat{\alpha}, \hat{\mb{b}})) = \frac{1}{n}\E_{\mbb{\varepsilon}, \mbb{\pi}, \mbb{\Psi}} (\|\alpha^* \mb{1}+ \mb f^* - \hat{\alpha} \mb{1} - \mb{S}\hat{\mb{b}}\|_2 ^2).
\end{align*}
Now let $\check{\mb{S}} = (\mb{1} \,\, \mb{S})$, and $\mb P_{\check{\mb{S}}}$ be the projection on to the column space of $\check{\mb{S}}$ (so $ \mb P_{\check{\mb{S}}} = \check{\mb{S}} \check{\mb{S}}^+$, where $\check{\mb{S}}^+$ denotes the Moore--Penrose pseudoinverse of $\check{\mb{S}}$).
 We have the following decomposition.
\begin{align*}
\alpha^* \mb{1}+ \mb f^* - \hat{\alpha} \mb{1} - \mb{S}\hat{\mb{b}} &= \alpha^* \mb{1} + \mb f^* - \mb P_{\check{\mb{S}}} \mb{Y} \\
 &= \alpha^* \mb{1} + \mb{S} \mb{b}^* + \mbb{\Delta} - \mb P_{\check{\mb{S}}} (\alpha^*\mb{1} + \mb{S} \mb{b}^* + \mbb{\Delta} + \mbb{\varepsilon}) \\
 &= (\mb I-\mb P_{\check{\mb{S}}})\mbb{\Delta} - \mb P_{\check{\mb{S}}} \mbb{\varepsilon}.
\end{align*}
Hence
\begin{align}
 \MSPE ((\hat{\alpha}, \hat{\mb{b}})) &= \frac{1}{n}\E_{\mbb{\varepsilon}, \mbb{\pi}, \mbb{\Psi}} (\|(\mb I-\mb P_{\check{\mb{S}}})\mbb{\Delta} - \mb P_{\check{\mb{S}}} \mbb{\varepsilon}\|_2 ^2) \notag \\
  &= \frac{1}{n} \E_{\mbb{\pi}, \mbb{\Psi}} (\|(\mb I- \mb P_{\check{\mb{S}}})\mbb{\Delta}\|_2^2) + \frac{1}{n} \E_{\mbb{\pi}, \mbb{\Psi}} \{ \E_{\mbb{\varepsilon}} ( \| \mb P_{\check{\mb{S}}} \mbb{\varepsilon}\|_2 ^2 \,|\, \mbb{\pi}, \mbb{\Psi}) \} \notag \\
 & \leq \frac{1}{n} \E_{\mbb{\pi}, \mbb{\Psi}} (\|\mbb{\Delta}\|_2 ^2) + \frac{\sigma^2 \{(2^b-1)L+1\}}{n} \label{eq:rank_P_S} \\
 & \leq \frac{c_1}{L} + \frac{\sigma^2 \{(2^b-1)L+1\}}{n}, \notag 
\end{align}
where in for \eqref{eq:rank_P_S} we have used the fact that $\text{rank}(\check{\mb S}) \leq (2^b-1)L+1$ as each the $L$ blocks sums to a vector of 1's
\end{proof}

\begin{thm} \label{thm:ridge}
There exists $\lambda$ depending on $\mb f^*$ and $\mb S$ such that defining
\begin{equation*}
  (\hat{\alpha}, \hat{\mb{b}}) := \argmin{(\alpha, \mb{b}) \in \R \times \R^L} \norms{\mb{Y} - \hat{\alpha} \mb{1} - \mb{S}\mb{b}}_2 ^2 \; \text{ such that } \; \norms{\mb{b}}_2 ^2 \leq \lambda,
\end{equation*}
we have
 \begin{equation*}
  \MSPE ((\hat{\alpha}, \hat{\mb{b}})) \;\leq \; \sigma\sqrt{\frac{c_2}{n}}+ \frac{c_1}{L} +\frac{\sigma^2}{n}.
 \end{equation*}
\end{thm}
\begin{proof}
We will take $\lambda = \|\mb b^*\|_2^2$.
Let a bar over any vector $\mb v$ denote the average of the components of $\mb v$, so $\bar{\mb v} = \sum_{j} v_j$. Note that
$ \hat{\alpha} = \overline{\mb Y - \mb S \hat{\mb b}}$, and define $\hat{a}^* = \overline{\mb Y - \mb S \mb b^*}$.
By our choice of $\lambda$, we have that
\begin{gather*}
  \norms{\mb{Y} - \hat{\alpha}\mb{1} - \mb{S}\hat{\mb{b}}}_2 ^2 \leq \norms{\mb{Y} - \hat{a}^*\mb{1} - \mb{S}\mb{b}^*}_2 ^2.
\end{gather*}
Noting that for any $\mb v, \mb u \in \R^n$, $\mb v^T (\mb u - \bar{\mb u} \mb{1}) = (\mb v - \bar{\mb v}\mb{1})^T \mb u$, rearranging the inequality above we get
\begin{gather} \label{eq:ridge_1}
 \norms{\alpha^*\mb 1 + \mb f^* - \hat{\alpha}\mb{1} - \mb{S}\hat{\mb{b}}}_2 ^2 \leq 2 (\mbb{\varepsilon} -\bar{\mbb{\varepsilon}}\mb{1})^T \mb{S} (\hat{\mb{b}} - \mb{b}^*) + \norms{\alpha^*\mb 1 + \mb f^* - \hat{a}^* \mb{1} - \mb{S}\mb{b}^*}_2 ^2.
\end{gather}
Now observe that
\begin{align}
 \norms{\alpha^*\mb 1 + \mb f^* - \hat{a}^* \mb{1} - \mb{S}\mb{b}^*}_2 ^2 &= \norms{\mb f^* - \overline{\mb f^*}\mb{1} - (\mb S\mb b^* -\overline{\mb S \mb b^*}\mb 1)}_2 ^2 + n\bar{\mbb\varepsilon}^2 \notag \\
 & \leq \norms{\mb f^* - \mb S\mb b^*}_2 ^2 + n\bar{\mbb\varepsilon}^2. \label{eq:bar_inequ1}
\end{align}
As $\mb{b}^*$ is independent of $\mbb{\varepsilon}$, taking expectations of \eqref{eq:ridge_1} yields
\begin{align} 
 \MSPE(\hat{\mb{b}}) =& \frac{2}{n}\E\{(\mbb{\varepsilon} - \bar{\mbb{\varepsilon}}\mb{1})^T \mb{S} \hat{\mb{b}}\} + \frac{1}{n}\E(\norms{\mb f^* - \mb{S}\mb{b}^*}_2 ^2) + \frac{\sigma^2}{n}. \label{eq:MSPE_ridge_1}
\end{align}
Now using the fact that $\|\hat{\mb b}\|_2 \leq \|\mb b^*\|_2$ and
applying the Cauchy--Schwarz inequality we have
\begin{align*}
 \E_{\mbb{\varepsilon}, \mbb{\pi}, \mbb{\Psi}} \{(\mbb{\varepsilon} - \bar{\mbb{\varepsilon}}\mb{1})^T \mb{S} \hat{\mb{b}}\} & \leq \sqrt{\E_{\mbb{\varepsilon}, \mbb{\pi}, \mbb{\Psi}} \{\norms{\mb{S}^T (\mbb{\varepsilon} - \bar{\mbb{\varepsilon}}\mb{1})}_2 ^2\}} \sqrt{\E (\norms{\mb{b}^*}_2 ^2)}.
\end{align*}
But
\begin{align*}
 \E_{\mbb{\varepsilon}} (\norms{\mb{S}^T (\mbb{\varepsilon} - \bar{\mbb{\varepsilon}}\mb{1})}_2 ^2 | \mbb{\pi}, \mbb{\Psi}) &= \E_{\mbb{\varepsilon}} [\Tr\{(\mbb{\varepsilon}-\bar{\mbb{\varepsilon}}\mb{1})^T \mb{S} \mb{S}^T (\mbb{\varepsilon}-\bar{\mbb{\varepsilon}}\mb{1})\}| \mbb{\pi}, \mbb{\Psi}] \\
 &= \E_{\mbb{\varepsilon}} [\Tr\{(\mbb{\varepsilon}-\bar{\mbb{\varepsilon}}\mb{1}) (\mbb{\varepsilon}-\bar{\mbb{\varepsilon}}\mb{1})^T \mb{S} \mb{S}^T \}| \mbb{\pi}, \mbb{\Psi}] \\
 &= \Tr[ \E_{\mbb{\varepsilon}}\{(\mbb{\varepsilon}-\bar{\mbb{\varepsilon}}\mb{1})( \mbb{\varepsilon}-\bar{\mbb{\varepsilon}}\mb{1})^T\} \mb{S} \mb{S}^T ] \\
 &= \sigma^2 \norms{(\mb I - n^{-1} \mb 1 \mb 1^T) \mb S}_F ^2 \leq \sigma^2\norms{\mb S}_F ^2 \leq \sigma^2 nL,
\end{align*}
whence
\begin{equation}
  \E_{\mbb{\varepsilon}, \mbb{\pi}, \mbb{\Psi}} \{(\mbb{\varepsilon}-\bar{\mbb{\varepsilon}}\mb{1})^T \mb{S} \hat{\mb{b}}\} \leq \sigma \sqrt{c_2 n}. \label{eq:b_hat_S_bd}
\end{equation}
Substituting in to \eqref{eq:MSPE_ridge_1} then gives the result.
\end{proof}

\subsection{Logistic regression}
We give an analogous result to Theorem~\ref{thm:main_ridge_pred_error} for classification problems under logistic loss. 
Let $\mb X \in [-1, 1]^{n \times p}$ be the design matrix of predictor variables and let $\mb Y \in \{0,1\}^n$ be an associated vector of class labels. We assume the model
\begin{equation*}
 Y_i \sim \mathrm{Bernoulli}(p_i); \qquad \log\left( \frac{p_i}{1 - p_i} \right) = f_i, 
\end{equation*}
with the $Y_i$ independent for $1 \leq i \leq n$.
Define
 \begin{equation*} 
  \hat{\mb{b}}_\lambda = \argmin{\mb{b}} \frac{1}{n}\sum_{i=1} ^n \left[ -Y_i \mb{s}_i ^T \mb{b} + \log\{1+\exp(\mb{s}_i ^T \mb{b})\} \right] \; \text{ such that } \; \norms{\mb{b}}_2 ^2 \leq \lambda.
 \end{equation*}
 Let $\mathcal{E}(\hat{\mb{b}}_\lambda)$ denote the excess risk of $\hat{\mb{b}}_\lambda$ under logistic loss, so
 \begin{equation*}
  \mathcal{E}(\hat{\mb{b}}_\lambda) = \frac{1}{n} \sum_{i=1} ^n \left[ -p_i \mb{s}_i ^T \hat{\mb{b}}_\lambda + \log\{1+\exp(\mb{s}_i ^T \hat{\mb{b}}_\lambda)\} \right] - \frac{1}{n} \sum_{i=1} ^n \left[ -p_i f_i + \log\{1+\exp(f_i)\} \right].
 \end{equation*}
\begin{thm} \label{thm:logistic}
 Let $\tilde{p} \in \R$  be given by \eqref{eq:def_p_tilde}.
Then we have that there exists $\lambda$ such that
\begin{equation*}
\E_{\mb{Y}, \mbb{\pi}, \mbb{\Psi}}\{\mathcal{E}(\hat{\mb{b}}_\lambda)\} \leq \frac{c_1}{4L} + \sqrt{\tilde{p}c_2/n}.
\end{equation*}
\end{thm}
\begin{proof}
We
take $\lambda=\|\mb b^*\|_2^2$. By the definition of $\hat{\mb{b}}$ (dropping the subscript $\lambda$), we have
\begin{gather*}
 \frac{1}{n} \sum_{i=1} ^n \left[ -Y_i \mb{s}_i ^T \hat{\mb{b}} + \log\{1+\exp(\mb{s}_i ^T \hat{\mb{b}})\} \right] \leq \frac{1}{n} \sum_{i=1} ^n \left[ -Y_i \mb{s}_i ^T \mb{b}^* + \log\{1+\exp(\mb{s}_i ^T \mb{b}^*)\} \right].
\end{gather*}
 Using this, analogously to \eqref{eq:ridge_1} we get, 
\begin{gather*}
 \mathcal{E}(\hat{\mb{b}}) \leq \frac{1}{n} \sum_{i=1} ^n (Y_i - p_i) \{\mb{S}(\hat{\mb{b}} - \mb{b}^*)\}_i + \mathcal{E}(\mb{b}^*).
\end{gather*}
Let $\mbb{\varepsilon}:=\mb{Y} - \mb{p}$ be the residual vector. Since $\mb b^*$ is independent of $\mbb\varepsilon$, after taking expectations we arrive at
\begin{align*}
 \E_{\mbb{\varepsilon}, \mbb{\pi}, \mbb{\Psi}}\{\mathcal{E}(\hat{\mb{b}})\} &\leq \frac{1}{n} \E_{\mbb{\varepsilon}, \mbb{\pi}, \mbb{\Psi}}(\mbb{\varepsilon} ^T \mb{S} \hat{\mb{b}}) + \E_{\mbb{\pi}, \mbb{\Psi}}\{\mathcal{E}(\mb{b}^*)\}.
\end{align*}
Write $h(a)=\log(1+e^{a})$. By the mean value theorem, we have
\begin{align*}
 |\mathcal{E}(\mb{b}^*)| &= \frac{1}{n}\sum_{i=1}^n|h(\mb s_i^T\mb b^*) - h(f_i) - (\mb s_i^T\mb b^* -f_i)h'(f_i)|\\
 &\leq \frac{1}{n}\sup_{a \in \R} h''(a) \norms{\mb f^* - \mb{S}\mb{b}^*}_2 ^2 \leq \frac{c_1}{4L}.
\end{align*}
The same argument that leads to \eqref{eq:b_hat_S_bd} gives
\begin{equation*}
 \frac{1}{n} \E_{\mbb{\varepsilon}, \mbb{\pi}, \mbb{\Psi}}(\mbb{\varepsilon} ^T \mb{S} \hat{\mb{b}}) \leq \frac{1}{n} \sqrt{\E_{\mbb{\varepsilon}, \mbb{\pi}, \mbb{\Psi}} (\norms{\mb{S}^T \mbb{\varepsilon}}_2 ^2)}  \sqrt{c_2/L} \leq \sqrt{\tilde{p}c_2/n}.
\end{equation*}
Collecting together the various inequalities, we get the required result.
\end{proof}
\section{Technical lemmas}
In this section we collect all technical lemmas used by the results presented earlier.
\begin{lem} \label{lem:sum_sq}
 Let $(a_i)_{i=1} ^\infty$ and $(b_i)_{i=1} ^\infty$ be two sequences of non-negative, non-increasing, real numbers such that that there is some $i^* \in \mathbb{N}$ for which
\begin{gather*}
 a_i \leq b_i \quad \text{for all } i \leq i^*, \\
 a_i \geq b_i \quad \text{for all } i > i^*.
\end{gather*}
\begin{enumerate}[(i)]
 \item If
 \begin{equation*}
  \sum_{i=1} ^\infty a_i = \sum_{i=1} ^\infty b_i < \infty,
 \end{equation*}
 and $m \geq 1$, then
 \begin{equation*}
  \sum_{i=1} ^\infty a_i ^m \leq \sum_{i=1} ^\infty b_i ^m.    
 \end{equation*}
\item If $(c_i)_{i=1} ^\infty$ is a sequence of non-negative, non-decreasing real numbers and
 \begin{equation*}
  \sum_{i=1} ^\infty b_i \leq \sum_{i=1} ^\infty a_i <\infty, \;\; \sum_{i=1} ^\infty c_i a_i \, , \, \sum_{i=1} ^\infty c_i b_i < \infty,
 \end{equation*}
 then
 \begin{equation*}
   \sum_{i=1} ^\infty c_i a_i \geq \sum_{i=1} ^\infty c_i b_i.
 \end{equation*}
\end{enumerate}
\end{lem}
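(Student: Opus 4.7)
The plan is to exploit the simple structural fact guaranteed by the crossing index $i^*$ together with the monotonicity of the two sequences: since $a$ and $b$ are both non-increasing and $a_{i^*}\le b_{i^*}$, $a_{i^*+1}\ge b_{i^*+1}$, the interval $[a_{i^*+1},\,a_{i^*}]$ separates the ``high-index'' values $\{a_i,b_i:i>i^*\}$ from the ``low-index'' values $\{a_i,b_i:i\le i^*\}$. Any $\xi$ in this interval will dominate all terms beyond $i^*$ and be dominated by all terms up to $i^*$, which is exactly what both inequalities need.

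For (i), I would fix such a $\xi$ (say $\xi=a_{i^*}$) and use convexity of $t\mapsto t^m$ on $[0,\infty)$ for $m\ge 1$, equivalently the fact that $t\mapsto mt^{m-1}$ is non-decreasing. For $i\le i^*$ one has $a_i\ge \xi$, so
\[
b_i^m-a_i^m \;=\;\int_{a_i}^{b_i} m t^{m-1}\,dt \;\ge\; m\xi^{m-1}(b_i-a_i),
\]
and for $i>i^*$ one has $a_i,b_i\le\xi$, giving the reverse bound
\[
a_i^m-b_i^m \;=\;\int_{b_i}^{a_i} m t^{m-1}\,dt \;\le\; m\xi^{m-1}(a_i-b_i).
\]
Summing the first inequality over $i\le i^*$ and subtracting the second summed over $i>i^*$ yields
\[
\sum_{i=1}^\infty (b_i^m-a_i^m) \;\ge\; m\xi^{m-1}\sum_{i=1}^\infty (b_i-a_i) \;=\;0,
\]
where the summability condition $\sum a_i=\sum b_i<\infty$ legitimises the rearrangement.

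For (ii), the argument is the same crossing trick but even simpler because no convexity is needed. With $\xi:=c_{i^*}$, monotonicity of $c$ gives $c_i\le\xi$ for $i\le i^*$ and $c_i\ge\xi$ for $i>i^*$, so
\[
\sum_{i=1}^\infty c_i(a_i-b_i) \;=\; \sum_{i>i^*} c_i(a_i-b_i) \;-\; \sum_{i\le i^*} c_i(b_i-a_i) \;\ge\; \xi\sum_{i=1}^\infty(a_i-b_i) \;\ge\; 0,
\]
using $a_i-b_i\ge 0$ for $i>i^*$, $b_i-a_i\ge 0$ for $i\le i^*$, and $\sum a_i\ge \sum b_i$ in the last step; all sums are finite by hypothesis.

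I do not anticipate a real obstacle here. The only mild subtlety is verifying non-negativity of $\xi$ so that $\xi^{m-1}$ is well-defined and monotone (automatic from $a_i,b_i\ge 0$), and being slightly careful to handle the degenerate cases $i^*=0$ or $i^*=\infty$ (in which one of the two sums in the crossing split is empty and the inequality is immediate).
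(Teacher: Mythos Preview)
Your proof is correct. For part (ii) it is essentially identical to the paper's argument, which also pivots at $c_{i^*}$ and chains the same three inequalities.

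For part (i) the route differs slightly. The paper observes that the crossing condition together with $\sum a_i=\sum b_i$ means precisely that $(b_i)$ majorises $(a_i)$, and then appeals to Schur's majorisation inequality with the convex function $x\mapsto x^m$. Your argument is a direct, self-contained proof of that special case: choosing the separating level $\xi=a_{i^*}$ and using the secant bound $b_i^m-a_i^m\ge m\xi^{m-1}(b_i-a_i)$ (resp.\ $\le$) on the two sides of the crossing. This is exactly the ``support line at $\xi$'' proof of Schur's inequality specialised to this setup, so the underlying mechanism is the same. Your version has the advantage of being elementary and not requiring the reader to know the majorisation framework; the paper's version is shorter and makes clear that (i) would hold with $x^m$ replaced by any convex function.
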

\begin{proof}
Note that the sequence $(b_i)_{i=1} ^\infty$ majorises $(a_i)_{i=1} ^\infty$ (see page 191 of \citet{cauchy-schwarz}). 
Result (i) follows from applying Schur's majorisation inequality (\citet{cauchy-schwarz}; page 201) with the convex function $x \mapsto x^m$ on $[0, \infty)$.

 For (ii) we argue,
 \begin{equation*}
  \sum_{i=1} ^{i^*} c_i(b_i - a_i) \leq c_{i^*} \sum_{i=1} ^{i^*} (b_i - a_i) \leq c_{i^*} \sum_{i > i^*} (a_i - b_i) \leq \sum_{i > i^*} c_i (a_i - b_i). \qedhere
 \end{equation*}
\end{proof}

\begin{lem} \label{lem:inter_bd_lem}
Let $q, p \in \mathbb{N}$ with $q \geq 1$, $p \geq \max\{q, 3\}$. We have
 \begin{equation*}
  \sum_{\ell=1} ^{p-1} \ell \left( \frac{\binom{p-1-\ell}{q-1}}{ \binom{p-1}{q}} \right)^2 \geq \frac{1}{2(2-q/p)^2} \frac{p^2}{(p-1)^2}.
 \end{equation*}
\end{lem}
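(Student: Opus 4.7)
My plan is to lower-bound the sum by comparing the binomial ratios against a geometric sequence, mirroring the argument in the proof of Lemma~\ref{lem:main_bd_lem}. Throughout, write $P := p-1$ and
\[
\beta_\ell \;:=\; \frac{\binom{P-\ell}{q-1}}{\binom{P}{q}},\qquad \alpha_\ell \;:=\; \frac{q}{P}\Big(1-\frac{q}{P}\Big)^{\ell-1},
\]
so that $\beta$ and $\alpha$ are both probability distributions on $\{1,2,\ldots\}$ and the target reduces to a lower bound on $\sum_\ell \ell\beta_\ell^2$. The first step is to evaluate the geometric moment in closed form: using $\sum_{\ell\geq 1}\ell x^{\ell-1}=(1-x)^{-2}$ with $x=(1-q/P)^2$, one computes
\[
\sum_{\ell=1}^\infty \ell\alpha_\ell^2 \;=\; \frac{(q/P)^2}{\bigl[(q/P)(2-q/P)\bigr]^2} \;=\; \frac{1}{(2-q/P)^2}.
\]

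The main step is to show $\sum_\ell \ell\beta_\ell^2 \geq 1/(2-q/P)^2$. Note that $\alpha_1=\beta_1=q/P$; that $\beta_\ell\geq\alpha_\ell$ for $\ell$ up to a crossing index $\ell^*$ (as established in the proof of Lemma~\ref{lem:main_bd_lem}, and reflecting the fact that $\beta$ places more weight in the middle of its support); and that $\beta_\ell=0<\alpha_\ell$ for $\ell>P-q+1$. Writing $\ell=\sum_{k\geq 1}\mathbbm{1}_{\{k\leq \ell\}}$ and interchanging summations,
\[
\sum_\ell \ell \bigl(\beta_\ell^2-\alpha_\ell^2\bigr) \;=\; \sum_{k\geq 1}\bigl(T^\beta_k - T^\alpha_k\bigr),\qquad T^\gamma_k := \sum_{\ell\geq k}\gamma_\ell^2,
\]
so it suffices to verify that the cumulative tail differences are non-negative in aggregate. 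For each $k$, the shifted sequences $(\beta_{k},\beta_{k+1},\ldots)$ and $(\alpha_{k},\alpha_{k+1},\ldots)$ inherit a one-crossing structure, and applying Lemma~\ref{lem:sum_sq}(i) with $m=2$ to them (after normalising to equal sums) gives $T^\beta_k\geq T^\alpha_k$ for $k$ in the relevant range; where the inequality fails for large $k$, the negative contribution is dominated by the earlier positive terms because $\beta$ has zero tail whereas $\alpha$ retains only exponentially small mass. Summing over $k$ yields the desired inequality.

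The final step is the elementary algebraic identity
\[
\frac{1}{(2-q/(p-1))^2} \;\geq\; \frac{p}{2(p-1)(2-q/p)^2},
\]
equivalent after clearing denominators and taking square roots to $\sqrt 2\,(p-1)^{3/2}(2p-q)\geq p^{3/2}(2p-q-2)$. This can be verified by checking the endpoints $q=0$ (where the inequality reduces to $2(p-1)^3\geq p^3(1-1/p)^2$, which holds for $p\geq 2$) and $q=p$ (directly), and noting that the difference between the two sides is linear in $q$, so the minimum occurs at an endpoint.

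The hard part will be the cumulative-tail argument in the middle step: individual tail differences $T^\beta_k-T^\alpha_k$ can change sign, and while Lemma~\ref{lem:sum_sq}(i) handles the $k=1$ case cleanly, controlling the balance of positive and negative contributions across all $k$ will require careful book-keeping of the crossing index $\ell^*$ and the geometric decay rate.
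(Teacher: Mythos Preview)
Your plan has a genuine gap in the middle step, and it is not merely ``book-keeping''. You want to show $\sum_\ell \ell\,\beta_\ell^2 \geq \sum_\ell \ell\,\alpha_\ell^2$ via the tail decomposition $\sum_k (T^\beta_k - T^\alpha_k)$ and then appeal to Lemma~\ref{lem:sum_sq}(i) on the shifted sequences $(\beta_\ell)_{\ell\geq k}$, $(\alpha_\ell)_{\ell\geq k}$. But part~(i) of that lemma requires the two sequences to have \emph{equal} sums, and for $k\geq 2$ the tails satisfy $\sum_{\ell\geq k}\beta_\ell \leq \sum_{\ell\geq k}\alpha_\ell$ (since $\beta$ puts more mass at small indices). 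If you rescale $\beta$ up by a factor $c_k\geq 1$ to equalise the sums, part~(i) then only yields $c_k^2\,T^\beta_k \geq T^\alpha_k$, which is \emph{weaker} than the $T^\beta_k \geq T^\alpha_k$ you need. Nor can you simply invoke part~(ii) on the squared sequences $\beta_\ell^2$, $\alpha_\ell^2$ directly: they inherit the crossing ``$\beta^2\geq\alpha^2$ first, then $\beta^2\leq\alpha^2$'', which would make $\beta^2$ play the role of the $b$-sequence, but the side condition $\sum b_\ell\leq\sum a_\ell$ then becomes $\sum\beta_\ell^2\leq\sum\alpha_\ell^2$, which is false by part~(i). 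In short, the geometric comparison pits two effects against each other (larger total mass in $\beta^2$ versus that mass being concentrated at small $\ell$), and nothing in your outline resolves this competition.

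The paper avoids the difficulty altogether by comparing not with the geometric sequence but with a \emph{step} sequence: set $a_\ell=\beta_\ell^2$ and take $b_\ell=(q/P)^2$ for $\ell$ up to $N:=\lfloor P^2/\{(2P-q)q\}\rfloor$, a small remainder at $\ell=N+1$, and $0$ thereafter, where $N$ is chosen so that $\sum_\ell b_\ell = q/(2P-q)$. Since $\beta_\ell\leq\beta_1=q/P$, we have $a_\ell\leq b_\ell$ on the support of $b$, and $a_\ell\geq b_\ell=0$ afterwards, giving a single crossing; moreover $\sum_\ell a_\ell \geq q/(2P-q)=\sum_\ell b_\ell$ by \eqref{eq:geometric_bd_2}. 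Lemma~\ref{lem:sum_sq}(ii) with $c_\ell=\ell$ then applies cleanly to give $\sum_\ell \ell\,a_\ell \geq \sum_\ell \ell\,b_\ell$, and the right-hand side is just $(q/P)^2$ times a triangular number, which one computes to be at least $\tfrac{1}{2}x(x+1)$ with $x=P^2/\{(2P-q)q\}$; elementary algebra finishes. The point is that the step comparison makes the hypotheses of part~(ii) automatic, whereas your geometric comparison does not.

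Your final algebraic step relating $1/(2-q/P)^2$ to the target bound is fine, but it is moot without the middle inequality.
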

\begin{proof}
Let the sequences $(a_\ell)_{\ell=1} ^\infty$ and $(b_\ell)_{\ell=1} ^\infty$ be defined by
 \begin{align*}
  a_\ell = \begin{cases}
            \left( \frac{\binom{p-1-\ell}{q-1}}{ \binom{p-1}{q}} \right)^2  & \text{ if } 1 \leq \ell \leq p-1\\
            0 & \text{ otherwise},
           \end{cases}
\;\;\; b_\ell = \begin{cases}
                                                                                         \left(\frac{q}{p-1}\right)^2 & \text{ if } \ell \leq \floor{\frac{(p-1)^2}{\{2(p-1)-q\}q} } \\  \vspace{-8pt} \\
                                                                                         \frac{q}{2(p-1) -q} - \left(\frac{q}{p-1}\right)^2 \floor{\frac{(p-1)^2}{\{2(p-1)-q\}q} } & \text{ if } \ell = \floor{\frac{(p-1)^2}{\{2(p-1)-q\}q} } + 1 \\
                                                                                         0 & \text{ otherwise.}
                                                                                        \end{cases}
 \end{align*}
 Let the sequence $(c_\ell)_{\ell=1} ^\infty$ be defined by $c_\ell = \ell$.
Note the sequences $(a_\ell)_{\ell=1} ^\infty$, $(b_\ell)_{\ell=1} ^\infty$ and $(c_\ell)_{\ell=1} ^\infty$ satisfy the hypotheses of Lemma~\ref{lem:sum_sq}. Thus
\[
 \sum_{\ell=1} ^{p-1} \ell a_\ell \geq \sum_{\ell=1} ^{p-1} \ell b_\ell,
\]
and
\begin{align*}
 \sum_{\ell=1} ^{p-1} \ell b_\ell  & =\frac{1}{2}\left(\frac{q}{p-1}\right)^2 \left( \floor{\frac{(p-1)^2}{\{2(p-1)-q\}q} } + 1 \right) \floor{\frac{(p-1)^2}{\{2(p-1)-q\}q} } \\
 &\;\;\; + \left(\frac{q}{p-1}\right)^2 \left(\frac{(p-1)^2}{\{2(p-1)-q\}q} - \floor{\frac{(p-1)^2}{\{2(p-1)-q\}q} } \right) \left( \floor{\frac{(p-1)^2}{\{2(p-1)-q\}q} } + 1\right).
\end{align*}
Letting $x=(p-1)^2/ [\{2(p-1)-q\}q]$, we have
\begin{align*}
  \sum_{\ell=1} ^{p-1} \ell b_\ell  & = \frac{1}{2}(\floor{x}+1)\floor{x} + (x-\floor{x})(\floor{x} + 1) \\
  & = \frac{1}{2} x(x+1) - \frac{1}{2}\{(x-\floor{x})\floor{x} + (x-\floor{x})(x+1)\} + (x-\floor{x})(\floor{x}+1).
\end{align*}
Since $1 \geq 1/2 + (x-\floor{x})/2$, we see that
\[
 (x-\floor{x})(\floor{x} + 1) \geq \frac{1}{2}(x-\floor{x})(x+1+\floor{x}),
\]
so 
\begin{align*}
 \sum_{\ell=1}^{p-1} \ell b_\ell & \geq \frac{1}{2}x(x+1)\\
 &= \frac{1}{2}\left(\frac{(p-1)^2}{\{2(p-1)-q\}q} + 1\right) \frac{q}{2(p-1)-q} \\
 &= \frac{1}{2(p-1)} \frac{p + \{2-q/(p-1)\}q - 1}{\{2 - q/(p-1)\}^2} \\
 & \geq \frac{1}{2(p-1)} \frac{p+q}{\{2 - q/(p-1)\}^2} \\
 & \geq \frac{1}{2(2-q/p)^2} \frac{p^2}{(p-1)^2}. \qedhere
\end{align*}
\end{proof}

\begin{lem} \label{lem:taylor_coef_bd}
Let $\kappa(\delta)=\delta^{-a}$ where $a \in [0, 1]$. For $\ell \geq 2$,
\[
\abs{\frac{\kappa^{(\ell)}(1)}{\ell!}} \leq ae^a\frac{1}{\ell^{1-a}}.
\]
\end{lem}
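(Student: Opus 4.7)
}
The strategy is to compute the Taylor coefficients in closed form and then bound the resulting product by a standard exponential argument. First I would differentiate $\kappa(\delta)=\delta^{-a}$ directly $\ell$ times, obtaining
\[
\kappa^{(\ell)}(\delta) = (-1)^\ell a(a+1)(a+2)\cdots(a+\ell-1)\,\delta^{-a-\ell},
\]
so that
\[
\abs{\frac{\kappa^{(\ell)}(1)}{\ell!}} = \frac{a(a+1)(a+2)\cdots(a+\ell-1)}{\ell!} = \frac{a}{\ell}\prod_{k=1}^{\ell-1}\frac{a+k}{k} = \frac{a}{\ell}\prod_{k=1}^{\ell-1}\left(1+\frac{a}{k}\right).
\]

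The target inequality then reduces, after dividing by $a/\ell$, to showing
\[
\prod_{k=1}^{\ell-1}\left(1+\frac{a}{k}\right) \leq e^a \ell^a.
\]
For this I would use the elementary bound $1+x \leq e^x$ componentwise to get
\[
\prod_{k=1}^{\ell-1}\left(1+\frac{a}{k}\right) \leq \exp\left(a\sum_{k=1}^{\ell-1}\frac{1}{k}\right) = \exp(a H_{\ell-1}),
\]
where $H_{\ell-1}$ denotes the $(\ell-1)$-st harmonic number.

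The final step is a standard integral comparison for the harmonic sum: for $\ell\geq 2$,
\[
H_{\ell-1} = 1 + \sum_{k=2}^{\ell-1}\frac{1}{k} \leq 1 + \int_1^{\ell-1}\frac{dx}{x} = 1 + \log(\ell-1) \leq 1 + \log \ell,
\]
so $\exp(a H_{\ell-1}) \leq e^a \ell^a$, which combined with the preceding display completes the proof. There is no real obstacle here; the only mild point is to ensure the harmonic-sum bound uses the first term separately (rather than $\int_0^{\ell-1} dx/x$, which diverges), but this is routine.
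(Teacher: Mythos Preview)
Your proof is correct and follows essentially the same route as the paper: both compute the coefficient as $\frac{a}{\ell}\prod_{k=1}^{\ell-1}(1+a/k)$ and bound the product by $e^{a(1+\log(\ell-1))}$ via the harmonic-number estimate $H_{\ell-1}\leq 1+\log(\ell-1)$. The only cosmetic difference is that the paper inserts a Jensen step (concavity of $\log$) before applying $(1+x/n)^n\leq e^x$, whereas you apply $1+x\leq e^x$ termwise and then sum; the two arguments are equivalent and yours is arguably the more direct of the two.
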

\begin{proof}
\begin{align*}
\abs{\frac{\kappa^{(\ell)}(1)}{\ell!}} &= \frac{a(a+1)\cdots(a+\ell-1)}{1\cdot 2 \cdots \ell}\\
 &=\frac{a}{\ell} \frac{a+1}{1}\frac{a+2}{2} \cdots \frac{a+\ell-1}{\ell-1}.
\end{align*}
By Jensen's inequality
\begin{align*}
 \frac{1}{\ell-1}\bigg\{\log\bigg(\frac{a+1}{1}\bigg) &+ \log\bigg(\frac{a+2}{2}\bigg) +\cdots+ \log\bigg(\frac{a+\ell-1}{\ell-1}\bigg)\bigg\} \\
 & \leq \log\bigg(1 + \frac{a\{1+\log(\ell-1)\}}{\ell-1}\bigg),
\end{align*}
and
\[
 \bigg(1 + \frac{a\{1+\log(\ell-1)\}}{\ell-1}\bigg)^{\ell-1} \leq \exp[a\{1+\log(\ell-1)\}].
\]
Thus
\[
 \abs{\frac{\kappa^{(\ell)}(1)}{\ell!}} \leq a e^a \frac{(\ell-1)^a}{\ell} \leq ae^a\frac{1}{\ell^{1-a}}. \qedhere
\]
\end{proof}

\begin{lem} \label{lem:RKHS}
Suppose we have a sequence of positive definite kernels $\{k_L\}_{L=1}^\infty$ on a finite input space $\mathcal{X}$. For $L \in \mathbb{N}$, let $\mb K_L \in \R^{|\mathcal{X}| \times |\mathcal{X}|}$ be the matrix with $K_{L,x x'} = k_L(x, x')$. Suppose that $\mb K_L \to \mb K$ where $\mb K$ is positive definite and corresponds to kernel $k$. Let the RKHS's associated with $k_L$ and $k$ be $\mathcal{H}_L$ and $\mathcal{H}$ respectively. Suppose $f_L \in \mathcal{H}_L$ satisfies $|f_L(x) - f(x)| \to 0$ for some $f:\mathcal{X} \to \R$ and all $x \in \mathcal{X}$, and $\|f_L\|_{\mathcal{H}_L} < C$ for some $C > 0$. Then $f \in \mathcal{H}$ and $\|f_L\|_{\mathcal{H}_L} \to \|f \|_{\mathcal{H}}$ as $L \to \infty$.
\end{lem}
\begin{proof}
Since $\mathcal{X}$ is finite, for each $L$ there exists $\mbb\alpha_L \in \R^{\mathcal{|X|}}$ with $(f_L(x))_{x \in \mathcal{X}} = \mb K_L\mbb\alpha_L$. 
Writing $\mb f = (f(x))_{x \in \mathcal{X}}$, we have $\mb K_L \mbb\alpha_L \to \mb f$. Now $\mb f = \mb K \mbb\alpha$ where $\mbb\alpha = \mb K^{-1} \mb f$ showing that $f \in \mathcal{H}$.

It remains to show that $\mbb\alpha_L^T \mb K_L \mbb\alpha_L \to \mbb\alpha^T \mb K \mbb\alpha$. Note that $\mb K_L$ is positive definite for $L$ sufficiently large, so the fact that $\mbb\alpha_L^T \mb K_L \mbb\alpha_L < C$ ensures the $\mbb\alpha_L$ are bounded. Now suppose, for a contradiction, that there exists $\epsilon >0 $ and a subsequence $L_j$ with
\begin{equation} \label{eq:kern_lim}
|\mbb\alpha_{L_j}^T \mb K_{L_j} \mbb\alpha_{L_j} - \mbb\alpha^T \mb K \mbb\alpha|> \epsilon.
\end{equation}
Then as the $\mbb\alpha_{L_j}$ are bounded, there exists a further subsequence $L_{j_m} = l_m$ such that $\mbb\alpha_{l_m} \to \mbb\alpha_*$ as $m \to \infty$. But then since the fact that $\mb K_L \to \mb K$ implies the maximal eigenvalues of the $\mb K_L$ are bounded, $\mbb\alpha_{l_m}^T \mb K_{l_m} \mbb\alpha_{l_m} \to \mbb\alpha_*^T K_{l_m} \mbb\alpha_*$ as $m \to \infty$. But then $\mbb\alpha_{l_m}^T \mb K_{l_m} \mbb\alpha_{l_m} \to \mbb\alpha_{*}^T \mb K \mbb\alpha_{*}$, contradicting \eqref{eq:kern_lim}.
\end{proof}

\bibliographystyle{abbrvnat}
\bibliography{biball}

\end{document}